\documentclass{amsart}
\usepackage{amsmath, amscd, amssymb, amsthm}
\usepackage{bbm}
\usepackage{latexsym}
\usepackage{amsfonts}
\usepackage{graphicx}
\usepackage{xcolor}
\definecolor{revieworange}{RGB}{224,112,0}
\definecolor{reviewred}{RGB}{180,35,35}
\definecolor{reviewblue}{RGB}{0,82,170}

\usepackage[all,cmtip]{xy}
\usepackage[colorlinks,linkcolor=blue,breaklinks=true,urlcolor=blue,citecolor=blue,anchorcolor=blue,pagebackref]{hyperref}%
\usepackage{geometry}
\newtheorem{theorem}{Theorem}
\newtheorem{lemma}{Lemma}
\newtheorem{corollary}[theorem]{Corollary}

\newtheorem{proposition}{Proposition}

\newtheorem{conjecture}{Conjecture}

\numberwithin{equation}{section}

\newcommand{\R}{{\mathbb R}}

\newcommand{\C}{{\mathbb C}}

\renewcommand*\backref[1]{}
\renewcommand*\backrefalt[4]{ \ifcase #1 \or (cited on page #2) \else (cited on pages #2) \fi}

\newcommand{\be}{\begin{equation}}
\newcommand{\ee}{\end{equation}}
\newcommand{\bea}{\begin{eqnarray}}
\newcommand{\eea}{\end{eqnarray}}

\newcommand{\tr}{\mathrm{tr}}
\newcommand{\im}{\mathrm{Im}}
\newcommand{\re}{\mathrm{Re}}

\newcommand{\vphi}{\varphi}

\newcommand{\om}{\omega}

\def\XXint#1#2#3{{\setbox0=\hbox{$#1{#2#3}{\int}$ }
\vcenter{\hbox{$#2#3$ }}\kern-.6\wd0}}

\newcommand{\nab}{\nabla^{b}}
\newcommand{\nac}{\nabla^{c}}
\newcommand{\Tb}{T^{b}}
\newcommand{\Rb}{R^{b}}

\newcommand{\End}{\operatorname{End}}

\newcommand{\Vol}{\operatorname{Vol}}

\begin{document}

\title[The Fino-Vezzoni conjecture on balanced BTP manifolds]{The Fino-Vezzoni conjecture on balanced Bismut torsion-parallel manifolds}

\author{Shuwen Chen}
\address{Shuwen Chen. School of Mathematical Sciences, Chongqing Normal University, Chongqing 401331, China}
\email{{3153017458@qq.com}}\thanks{Zheng is the corresponding author. He is partially supported by National Natural Science Foundations of China
with the grant No. 12471039 and  12141101, and is supported by the 111 Project D21024.}

\author{Fangyang Zheng}
\address{Fangyang Zheng. School of Mathematical Sciences, Chongqing Normal University, Chongqing 401331, China}
\email{20190045@cqnu.edu.cn; franciszheng@yahoo.com} \thanks{}

\subjclass[2020]{53C55 (primary), 53C05 (secondary)}
\keywords{Hermitian manifolds; balanced metrics; pluriclosed metrics; Bismut connection; parallel torsion; Fino--Vezzoni conjecture}

\begin{abstract}
We study the Fino-Vezzoni conjecture on compact complex manifolds carrying a balanced Bismut torsion-parallel Hermitian metric. We prove that if such a manifold also admits a pluriclosed Hermitian metric, then it admits a K\"ahler metric.
\end{abstract}

\maketitle

\markleft{Shuwen Chen and Fangyang Zheng}
\markright{The Fino-Vezzoni conjecture on balanced BTP manifolds}

\tableofcontents

\section{Introduction}\label{sec:intro}

Two Hermitian conditions that occur frequently in non-K\"ahler geometry are the balanced and pluriclosed conditions. Let $(M^n,g)$ be a Hermitian manifold with fundamental form $\om$. We call $g$ {\em balanced} if $d(\om^{n-1})=0$, and {\em pluriclosed} if $\partial\bar\partial\om=0$. Fino and Vezzoni proposed the following conjecture concerning the coexistence of these two types of metrics \cite{FV2,FV16}.

\begin{conjecture}[Fino-Vezzoni, {\cite{FV2,FV16}}]\label{conj:FV}
Let $M^n$ be a compact complex manifold. If $M$ admits a balanced Hermitian metric and a pluriclosed Hermitian metric, then $M$ admits a K\"ahler metric.
\end{conjecture}

When $n=2$, the balanced condition is simply $d\omega=0$, so the metric is K\"ahler. Hence the conjecture is relevant only in complex dimension at least $3$. Another point is that the two metrics in the hypothesis need not be the same. In fact, Alexandrov and Ivanov proved that a Hermitian metric which is both balanced and pluriclosed is K\"ahler \cite{AI}. Thus the problem concerns two, in general different, Hermitian metrics on the same compact complex manifold.

Several cases of the conjecture are known. Chiose proved it for manifolds in Fujiki class ${\mathcal C}$ \cite{Chiose}, and Verbitsky treated twistor spaces \cite{Verbitsky}. The results of Fu-Li-Yau and Fei give further examples among non-K\"ahler Calabi-Yau manifolds \cite{FuLiYau,Fei}, while Otiman established the conjecture for Oeljeklaus-Toma manifolds \cite{Otiman}. There is also a curvature version of this phenomenon: Zhao and Zheng showed that a compact non-K\"ahler manifold admitting a Bismut K\"ahler-like metric cannot admit a balanced metric \cite{ZhaoZhengPluriclosed}.

There has also been substantial progress in the invariant setting. In their earlier work, Fino and Vezzoni verified the conjecture for $6$-dimensional nilpotent groups and for $6$-dimensional solvable groups of Calabi--Yau type \cite{FV2}. They later proved it for
$2$-step nilpotent groups in any dimension \cite{FV16}, and the subsequent work of Arroyo and Nicolini implies that the conjecture holds for nilmanifolds in general \cite{ArroyoNicolini}. The conjecture is also known for compact semisimple Lie groups \cite{FGV,Podesta}. Giusti and Podest\`a treated compact quotients of even-dimensional non-compact real simple Lie groups of inner type, constructing invariant complex structures which admit balanced metrics but no pluriclosed metrics \cite{GPodesta}. Fino and Paradiso obtained further results for almost abelian Lie algebras and for a class of almost nilpotent solvable Lie algebras \cite{FP1,FP2,FP3}. Freibert and Swann established the conjecture for `pure-type' $2$-step solvable Lie algebras \cite{FSwann22,FSwann}; more recently, Fusi and Gentili extended this to all unimodular $2$-step solvable Lie algebras \cite{FusiGentili}.

For Lie algebras containing a $J$-invariant abelian ideal of codimension $2$, Li and Zheng proved the conjecture in \cite[Theorem 1.1]{LiZheng}. Cao and Zheng subsequently removed the $J$-invariance assumption and settled the codimension-$2$ abelian-ideal case \cite[Theorem 2 and Corollary 3]{CaoZheng}. More recently, Fino and Vezzoni obtained a result for the pluriclosed flow on compact quotients of Lie groups with an invariant Chern--Ricci-flat balanced background and an invariant pluriclosed initial metric \cite{FinoVezzoniFlow}. Kwong proved the conjecture for compact discrete quotients of complex homogeneous spaces with compact isotropy, in particular for compact quotients of Lie groups \cite{KwongHomogeneous}.

In this paper, we consider Bismut torsion-parallel metrics. Given a Hermitian manifold $(M^n,g)$ with fundamental form $\omega$, let $\eta$ be the global $(1,0)$-form on $M^n$ determined by
\begin{equation}\label{eq:Gauduchon-formula}
\partial(\om^{n-1})=-\eta\wedge\om^{n-1}.
\end{equation}
It is the {\em Gauduchon torsion $1$-form} \cite{Gauduchon84}; in particular, $g$ is balanced exactly when $\eta=0$. Let $\nab$ be the Bismut connection of $g$ and $\Tb$ its torsion. The {\em Bismut connection} is the unique Hermitian connection with totally skew-symmetric torsion \cite{Bismut}. Following Zhao and Zheng \cite{ZhaoZ22}, we call $g$ {\em Bismut torsion-parallel}, or {\em BTP}, if $\nab\Tb=0$. Their curvature characterization also gives the equivalent condition $\nab T=0$, where $T$ is the Chern torsion. Balanced non-K\"ahler BTP metrics form a restrictive class, although in higher dimensions their geometry is considerably less rigid than in complex dimension $3$.

We shall also use the generalized Gauduchon condition introduced by Fu, Wang, and Wu. For $1\leq k\leq n-1$, a Hermitian metric with fundamental form $\om$ is called {\em $k$-Gauduchon} if
$$
\partial\bar\partial(\om^k)\wedge\om^{n-k-1}=0;
$$
see Fu-Wang-Wu \cite[Definition 1.1]{FuWangWu}. The case $k=n-1$ is the usual Gauduchon condition. When $n\geq3$, the case $k=n-2$ will be used in the last step of the proof. Note that this case means that $\partial\bar\partial(\om^{n-2})\wedge\om =0$, which is weaker than $\partial\bar\partial(\om^{n-2})=0$. The latter is called {\em astheno-K\"ahler} and was introduced by Jost and Yau in \cite{JostYau}.

For BTP threefolds, Zhao and Zheng first proved that the
Fino--Vezzoni conjecture holds in the non-balanced case
\cite[Corollary 1.19]{ZhaoZ22}. More recently, they classified
compact balanced BTP threefolds \cite[Theorem 1.1]{ZZThreefold} and obtained the conjecture in the balanced case as well \cite[Corollary 5.8]{ZZThreefold}. Consequently, the Fino--Vezzoni conjecture is known for all compact BTP threefolds. The proof given below is independent of the three-dimensional classification and works uniformly for all
$n\geq3$.

\begin{theorem}\label{thm:main}
Let $M^n$ be a compact complex manifold. Suppose that $M$ admits a balanced Bismut torsion-parallel Hermitian metric $g$ and a pluriclosed Hermitian metric $h$. Then $M$ admits a K\"ahler metric.
\end{theorem}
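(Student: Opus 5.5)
We argue by contradiction: assume $g$ is balanced BTP but not K\"ahler, and that $h$ is pluriclosed. We will produce a nonzero closed positive current or form that is exact, or $\partial\bar\partial$-exact, and integrate it against $h$ (or against $\om^{n-2}$) to get a contradiction.

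The standard duality route goes as follows. By a Harvey--Lawson/Egidi type duality, a pluriclosed metric $h$ exists if and only if there is no nonzero positive $(n-1,n-1)$-current $\Theta$ that is the $(n-1,n-1)$-part of a boundary, $\Theta = \partial \bar S + \bar\partial S$. So it suffices to construct, from $g$, a nonzero positive $(n-1,n-1)$-form of the form $\partial\bar\partial$ of something (or $\partial\bar\alpha+\bar\partial\alpha$), and then pair it with $h$:
\[
0 = \int_M \partial\bar\partial\Phi \wedge \om_h=\int_M \Phi\wedge \partial\bar\partial\om_h ,
\]
while the left side is $>0$ because $\partial\bar\partial\Phi$ is positive and nonzero.

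The natural candidate is $\Phi=\om^{n-2}$ built from the BTP metric. Write $\partial\om=\tau$, with Chern torsion components $T^k_{ij}$. Balancedness gives $\eta=\sum_i T^i_{ik}=0$. BTP gives $\nabla^b T=0$, so all torsion invariants, in particular $|T|^2$, are constant, and the curvature identities of Zhao--Zheng for BTP metrics let us express $\partial\bar\partial\om$ algebraically in $T$ and $\bar T$. Concretely, we compute $\sqrt{-1}\partial\bar\partial(\om^{n-2})$ as $(n-2)\sqrt{-1}\partial\bar\partial\om\wedge\om^{n-3}$ plus $(n-2)(n-3)\sqrt{-1}\partial\om\wedge\bar\partial\om\wedge\om^{n-4}$. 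Using $\eta=0$ and the BTP identity $\sqrt{-1}\partial\bar\partial\om = $ (quadratic in $T$), we aim to show that
\[
\sqrt{-1}\partial\bar\partial(\om^{n-2}) \;=\; c\,\Psi,
\]
where $\Psi$ is a semipositive $(n-1,n-1)$-form and $c$ has a definite sign. At minimum we want its trace part:
\[
\sqrt{-1}\partial\bar\partial(\om^{n-2})\wedge\om = -c_n |T|^2\om^n, \qquad c_n>0 .
\]
This is exactly where the $(n-2)$-Gauduchon notion enters. A balanced non-K\"ahler BTP metric is then far from $(n-2)$-Gauduchon, with a pointwise sign of constant magnitude.

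To finish, we want to pair with $h$, not with $\om$. The cleaner path is to use positivity of the full $(n-1,n-1)$-form: if $-\sqrt{-1}\partial\bar\partial(\om^{n-2})\ge 0$ and is nonzero, then
\[
0=\int_M \om^{n-2}\wedge\sqrt{-1}\partial\bar\partial\om_h=\int_M \sqrt{-1}\partial\bar\partial(\om^{n-2})\wedge\om_h<0,
\]
which is a contradiction, so $T=0$ and $g$ itself is K\"ahler. If only the trace inequality is available, we instead use the Fu--Wang--Wu existence theory. Choose the $(n-2)$-Gauduchon representative in the conformal class of $h$, or average $h$ using the $\nabla^b$-parallel structure. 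The BTP condition makes the holonomy reduce, and the local structure is a product-like splitting adapted to $T$. This lets us replace $\om_h$ by a form whose pairing with $\partial\bar\partial(\om^{n-2})$ is controlled by the trace.

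The main obstacle is the sign and positivity statement for $\sqrt{-1}\partial\bar\partial(\om^{n-2})$. It requires the precise BTP curvature identities together with the balanced condition to cancel the mixed terms $T\bar T$ contracted with $\om$. I would first try to establish the full semi-definiteness using the Zhao--Zheng algebraic description of balanced BTP torsion, in which the torsion tensors $T_i$ are mutually related by $\nabla^b$-parallel symmetries. I would fall back on the trace identity plus a conformal or averaging argument if only the trace is computable.
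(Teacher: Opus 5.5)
There is a genuine gap, and it lies in the target rather than in a computation. Your plan is to show that $g$ itself must be K\"ahler ($T=0$), and that is false.

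Take the flag threefold $SU(3)/\mathbb T^2\cong\mathbb P(T_{\mathbb P^2})$ with an invariant integrable complex structure and the normal metric $g$ induced by the Killing form.
\begin{itemize}
\item Its Bismut connection is the canonical connection of this naturally reductive space: that connection preserves $g$ and the invariant $J$, and its torsion $-[X,Y]_{\mathfrak m}$ is totally skew and parallel. So $\nab\Tb=0$.
\item The pointwise torsion Lie algebra is the nilpotent algebra $\mathfrak n^+=\bigoplus_{\alpha>0}\mathfrak g_\alpha$, so $\tr L_x=0$ and $g$ is balanced.
\item Since $[\mathfrak g_{\alpha_1},\mathfrak g_{\alpha_2}]\neq0$, $g$ is not K\"ahler.
\end{itemize}
Yet the manifold is projective, so it carries a K\"ahler, in particular pluriclosed, metric $h$.

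Hence your key step cannot hold. Put $\Psi=\sqrt{-1}\partial\bar\partial(\om^{n-2})$. If $\Psi$ were semi-definite, then $\int_M\Psi\wedge\om_h=\int_M\om^{n-2}\wedge\sqrt{-1}\partial\bar\partial\om_h=0$ with $\om_h>0$ would force $\Psi\equiv0$. But for every balanced metric, $\Psi\wedge\om=-(n-2)\sqrt{-1}\,\partial\om\wedge\bar\partial\om\wedge\om^{n-3}=(n-2)c_n|\partial\om|^2dV_g$ with $c_n>0$, by Hodge--Riemann. This is nonzero whenever $g$ is not K\"ahler. So your trace identity is true up to sign: it is positive, and it needs no BTP input. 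The Harvey--Lawson route fails for the same reason: $g$ yields no obstruction current, because $h$ exists.

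What is missing is that the K\"ahler metric must be a \emph{new} metric built from $h$, and the $(n-2)$-Gauduchon test must be run on that metric, not on $\om$. Your fallback of averaging $h$ over the Bismut holonomy is the right first step. It produces a positive parallel $K$ with $\int_M\Psi\wedge\om_h=\int_M\Psi\wedge\alpha_K$ for every parallel real $(n-1,n-1)$-form $\Psi$. However, pairing $\sqrt{-1}\partial\bar\partial\om^{n-2}$ with $\alpha_K$ is not its trace against $\om$ unless $K$ is a multiple of the identity, so nothing follows about $g$. Moreover, $\alpha_K$ itself need not be balanced.

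The paper closes this gap in three steps.
\begin{enumerate}
\item It shows that every Bismut-parallel $(1,0)$-field $Z$ is orthogonal to $\im T$ and that $L_Z^*$ is a derivation. Hence $H_Z=\frac12(L_Z+L_Z^*)$ gives closed, parallel, trace-free variations $\alpha_{H_Z}$.
\item It maximizes $\log\det$ over positive $D\in K+\operatorname{span}_\R\{H_Z\}$. The first variation in the direction $H_Z$, with $Z$ dual to $X\mapsto\tr(D^{-1}L_X)$, forces $\tr(D^{-1}L_X)=0$. Thus $\alpha_D$ is balanced, while $\alpha_D-\alpha_K$ is closed.
\item For the parallel form $\Phi_D=\sqrt{-1}\partial\bar\partial\alpha_D^{n-2}$ it gets $\int_M\Phi_D\wedge\alpha_D=\int_M\Phi_D\wedge\alpha_K=\int_M\Phi_D\wedge\om_h=0$. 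Since $\Phi_D\wedge\alpha_D$ is a parallel top form, it vanishes pointwise.
\end{enumerate}
Only then does your trace/Hodge--Riemann identity enter, applied to $\alpha_D$, to give $d\alpha_D=0$.
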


Thus the Fino-Vezzoni conjecture holds on compact balanced BTP manifolds in every complex dimension.

We give a short description of the argument. Since the Chern torsion is $\nab$-parallel, its value at each point defines a complex Lie bracket which is preserved by Bismut parallel transport. A global $\nab$-parallel $(1,0)$-vector field then gives, through the Hermitian part of the corresponding inner derivation, a trace-free parallel Hermitian derivation. The associated real $(1,1)$-form is closed. On the other hand, the pluriclosed metric is averaged with respect to the Bismut holonomy to produce a positive $\nab$-parallel Hermitian endomorphism $K$ with the integral pairing property needed later. We then maximize $\log\det$ on the affine space obtained from $K$ by the closed parallel variations above. At a maximizer $D$, the first variation gives a positive balanced form $\alpha_D$, and $\alpha_D-\alpha_K$ is closed. The pluriclosed condition, together with the averaging identity, shows that $\alpha_D$ is $(n-2)$-Gauduchon. The balanced and $(n-2)$-Gauduchon identities then reduce the remaining term to the Hodge--Riemann form of the primitive $(2,1)$-form $\partial\alpha_D$, which forces $\partial\alpha_D=0$. Thus $\alpha_D$ is K\"ahler.

A determinant-maximization argument also occurs in Kwong's recent homogeneous-space proof \cite{KwongHomogeneous}, but the finite-dimensional family used there comes from invariant Aeppli classes. Here it is obtained instead from Bismut holonomy and parallel Hermitian derivations; no transitive group action is assumed.

\vspace{0.3cm}

\section{Preliminaries}

We first set up the notation, following the conventions in \cite{YZ18Cur,ZhaoZ22}. Let $(M^n,g)$ be a Hermitian manifold of complex dimension $n$, and write $g=\langle\, ,\,\rangle$, extended bilinearly over $\C$. Denote by $T^{1,0}M$ the holomorphic tangent bundle. Let $e={}^t\!(e_1,\ldots,e_n)$ be a local unitary frame of $T^{1,0}M$, and let $\vphi={}^t\!(\vphi_1,\ldots,\vphi_n)$ be its dual coframe, so that $\vphi_i(e_j)=\delta_{ij}$. Unless otherwise stated, all frame calculations below are understood with respect to such a unitary frame.

Denote by $\nac$ and $\nab$ the Chern and Bismut connections, respectively. We write $T^c=T$ and $R^c$ for the torsion and curvature of $\nac$, and $\Tb$ and $\Rb$ for those of $\nab$. The curvature tensor $R^D$ of a linear connection $D$ on a Riemannian manifold $M$ is defined  by
$$ R^D(x,y,z,w) = \langle R^D_{xy}z, \ w \rangle = \langle D_xD_yz-D_yD_xz - D_{[x,y]}z, \ w \rangle, $$
where $x,y,z,w$ are tangent vectors fields on $M$. In particular,
$$
R^b_{i\bar j k\bar\ell}
=
\left\langle R^b(e_i,\bar e_j)e_k,\bar e_\ell\right\rangle.
$$
The Chern torsion components are defined by
\begin{equation}\label{eq:Chern-torsion-components}
T^c(e_i,\bar e_j)=0,
\qquad
T^c(e_i,e_j)=\sum_kT^k_{ij}e_k,
\qquad
T^k_{ij}=-T^k_{ji}.
\end{equation}
Write
$$
\om=\sqrt{-1}\sum_i\vphi_i\wedge\bar\vphi_i,
\qquad
dV_g=\frac{\om^n}{n!}.
$$
The Gauduchon torsion 1-form is given by
$$
\eta=\sum_j\eta_j\vphi_j,
\qquad
\eta_j=\sum_iT^i_{ij}.
$$
\begin{equation}\label{eq:balanced}
 g\text{ is balanced}
 \quad\Longleftrightarrow\quad
 \sum_iT^i_{ij}=0,
 \qquad 1\leq j\leq n.
\end{equation}
For the Chern connection, write $\theta^c=\theta$ and $\Theta^c=\Theta$ for the connection and curvature matrices under $e$, and let $\tau={}^t\!(\tau_1,\ldots,\tau_n)$ be the column vector of Chern torsion $2$-forms. The first two Chern structure equations are
\begin{equation}\label{eq:Chern-structure-equations}
d\vphi=-{}^t\!\theta\wedge\vphi+\tau,
\qquad
d\theta=\theta\wedge\theta+\Theta,
\end{equation}
where $\tau_k=\frac12\sum_{i,j}T^k_{ij}\vphi_i\wedge\vphi_j.$ We use $\theta^b$ and $\Theta^b$ for the Bismut connection and curvature matrices. Let $\gamma = \nabla^b-\nabla^c$ be the tensor, and for simplicity we will also write $\gamma = \theta^b -\theta$ under $e$. Then by \cite{YZ18Cur} we have
\begin{equation} \label{eq:gamma}
 \gamma e_i = \sum_j \gamma_{ij} e_j = \sum_{j,k} \big( T^j_{ik}\varphi_k - \overline{T^i_{jk} } \, \overline{\varphi}_k \big) e_j.
\end{equation}
Write
$$ B_{i\bar{j}}=\sum_{r,s} T^j_{rs} \overline{ T^i_{rs} } , \ \ \ \ A_{i\bar{j}} =\sum_{r,s} T^r_{is} \overline{T^r_{js}}. $$
The tensors $A$ and $B$ are globally defined Hermitian
$(1,1)$-tensors. After raising one index with $g$, we also
regard them as Hermitian endomorphisms of $T^{1,0}M$.

\begin{lemma}[Zhao--Zheng, {\cite[Theorem 1.1 and Proposition 1.5]{ZhaoZ22}}]\label{prop:known-BTP}
Let $(M^n,g)$ be a BTP Hermitian manifold. Then $\nab T=0$, and under any local unitary frame $e$ the following identities hold:
\begin{eqnarray}
&& \ \ \   R^b_{ijk\bar{\ell}}=0,
\qquad
R^b_{i\bar{j}k\bar{\ell}}=R^b_{k\bar{\ell}i\bar{j}},
\qquad
\forall\,1\leq i,j,k,\ell\leq n,
\label{eq:pair-symmetry}
\\
&&  \ \ \  \sum_r\bigl(T^\ell_{ir}T^r_{jk}+
T^\ell_{jr}T^r_{ki}+T^\ell_{kr}T^r_{ij}\bigr)=0,
\qquad
\forall\,1\leq i,j,k,\ell\leq n,
\label{eq:torsion-Jacobi}
\\
&&  \ \ \ R^b_{i\bar{j}k\bar{\ell}}-R^b_{k\bar{j}i\bar{\ell}}=\sum_r\bigl(
T^\ell_{ir}\overline{T^k_{jr}}+T^j_{kr}\overline{T^i_{\ell r}}-T^r_{ik}\overline{T^r_{j\ell}}-T^j_{ir}\overline{T^k_{\ell r}}-T^\ell_{kr}\overline{T^i_{jr}}\bigr),
\qquad
\forall\,1\leq i,j,k,\ell\leq n,
\label{eq:curvature-torsion}
\\
&& \ \ \  \nabla^b A=\nabla^bB=0,\, \  [A,B]=0.
\label{eq:AB-parallel-commute}
\end{eqnarray}
\end{lemma}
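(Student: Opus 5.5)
The statement collects results of Zhao--Zheng \cite{ZhaoZ22}, so in the paper it can simply be quoted. To reprove it I would work with the Bismut structure equations in a unitary frame, $d\vphi=-{}^t\theta^b\wedge\vphi+\tau^b$ and $d\theta^b=\theta^b\wedge\theta^b+\Theta^b$, together with the relation (\ref{eq:gamma}) between $\theta^b$ and $\theta$.

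\emph{Step 1: $\nab T=0$.} The first claim is that $\nab\Tb=0$ if and only if $\nab T=0$. From (\ref{eq:gamma}), the Bismut torsion $\Tb$ is a fixed linear expression in $T$, $\overline{T}$ and $g$. Its $(2,0)$-part is a multiple of $T$, and its mixed parts are obtained from $\overline{T}$ by lowering and raising indices with $g$. Since $\nab$ preserves both $g$ and $J$, it commutes with these algebraic operations and with type projections. Hence $\nab\Tb=0$ if and only if $\nab T=0$.

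\emph{Step 2: pair symmetry and the Jacobi identity.} I would use the standard consequences of parallel totally skew torsion. Let $H=g(\Tb\cdot,\cdot)$ be the torsion $3$-form. Then $\nab H=0$ gives $dH=2\sigma_H$, where $\sigma_H$ is the quadratic expression $\sigma_H(x,y,z,w)=\mathfrak{S}_{x,y,z}\,g(\Tb(x,y),\Tb(z,w))$. The first Bianchi identity for $\nab$ then reduces to
$$\mathfrak{S}_{x,y,z}R^b(x,y,z,w)=\sigma_H(x,y,z,w).$$
The right-hand side is a $4$-form, and this identity with a $4$-form right-hand side yields pair symmetry $R^b(x,y,z,w)=R^b(z,w,x,y)$ by the usual combinatorial argument that gives pair symmetry of Riemannian curvature. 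The connection $\nab$ is Hermitian, so $R^b(x,y,\cdot,\cdot)$ is of type $(1,1)$ in the last pair. Pair symmetry then forces type $(1,1)$ in the first pair as well, which gives $R^b_{ijk\bar\ell}=0$ together with the symmetry in (\ref{eq:pair-symmetry}). Now take the $(3,0)\otimes(1,0)$ component of the Bianchi identity, with arguments $e_i,e_j,e_k,\bar e_\ell$. The left-hand side vanishes by (\ref{eq:pair-symmetry}). Writing $\sigma_H$ in terms of $T$ turns the vanishing right-hand side into the torsion Jacobi identity (\ref{eq:torsion-Jacobi}).

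\emph{Step 3: the curvature--torsion identity.} Evaluate the Bianchi identity on $e_i,\bar e_j,e_k,\bar e_\ell$. The left-hand side becomes $R^b_{i\bar jk\bar\ell}-R^b_{k\bar ji\bar\ell}$ plus a term that is zero by type. Expanding $\sigma_H$ using (\ref{eq:gamma}) produces the five quadratic torsion terms in (\ref{eq:curvature-torsion}). The only real work here is bookkeeping of signs and conjugations.

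\emph{Step 4: $A$ and $B$.} The tensors $A$ and $B$ are built from $T$, $\overline T$ and $g$ by contractions, so $\nab A=\nab B=0$ follows directly from $\nab T=0$. The commutation $[A,B]=0$ is the step I expect to be the main obstacle. My plan is to use the Ricci identity: $T$ is parallel, so $R^b_{x\bar y}$ acts as a derivation annihilating $T$, that is,
$$\sum_r\bigl(R^b_{i\bar j r\bar s}T^s_{pq}\bigr)\text{-type terms}=0.$$
I would take the trace of this identity over $i=j$ and substitute (\ref{eq:curvature-torsion}) to express the traced curvature through $A$ and $B$. Combining the result with pair symmetry and with (\ref{eq:torsion-Jacobi}) contracted against $\overline T$, I expect to reach $AB-BA=0$, the commutator arising as the skew-Hermitian part of a Hermitian quantity. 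If this contraction does not close up directly, the fallback is the approach of \cite{ZhaoZ22}: diagonalize $B$ and use (\ref{eq:torsion-Jacobi}) to show that $A$ preserves the eigenspaces of $B$.
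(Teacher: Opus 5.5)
Your Steps 1--3 are sound. The paper gives no proof of this lemma and simply quotes \cite{ZhaoZ22}, so they are a legitimate self-contained route. For parallel skew torsion one has $\mathfrak S_{x,y,z}R^b(x,y,z,w)=\sigma_H(x,y,z,w)$ with $\sigma_H$ a $4$-form, and this gives pair symmetry. Evaluating on $(e_i,e_j,e_k,\bar e_\ell)$ and on $(e_i,\bar e_j,e_k,\bar e_\ell)$ with \eqref{eq:gamma} then reproduces \eqref{eq:torsion-Jacobi} and exactly the five terms of \eqref{eq:curvature-torsion}. The parallelism of $A$ and $B$ is also immediate.

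The gap is $[A,B]=0$, for which you only state an expectation. Your fallback, as you describe it using only \eqref{eq:torsion-Jacobi}, cannot work: \eqref{eq:torsion-Jacobi} holds for every complex Lie algebra with every Hermitian inner product, and there $A$ and $B$ need not commute. For instance, in a unitary frame take $[e_1,e_2]=e_2$, $[e_1,e_3]=e_2+2e_3$, $[e_2,e_3]=0$. Then $(A_{i\bar j})=(6)\oplus\begin{pmatrix}1&1\\1&5\end{pmatrix}$ and $(B_{i\bar j})=(0)\oplus\begin{pmatrix}4&4\\4&8\end{pmatrix}$, and these do not commute. Likewise, tracing $\pi(R^b(x,\bar y))T=0$ only tells you that $\sum_iR^b(e_i,\bar e_i)$ is a derivation; it does not produce a commutator.

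What is missing is the interplay of pair symmetry with the Ricci identity. Set $j=i$ in \eqref{eq:curvature-torsion} and sum. The two $A$-terms cancel and
\[
\sum_iR^b_{i\bar ik\bar\ell}-\sum_iR^b_{k\bar ii\bar\ell}
=B_{k\bar\ell}-\sum_r\bigl(T^\ell_{kr}\bar\eta_r+\eta_r\overline{T^k_{\ell r}}\bigr).
\]
The first sum is the matrix of the curvature endomorphism $\sum_iR^b(e_i,\bar e_i)$.

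For the second sum, let $\mathfrak h$ be the span of the endomorphisms $R^b(x,\bar y)$, and write $(X)_{i\bar j}=g(Xe_i,\bar e_j)$. Pair symmetry shows that $R^b$, viewed as a bilinear form in $(e_i\otimes\bar e_j,\,e_k\otimes\bar e_\ell)$, lies in $\mathfrak h\otimes\mathfrak h$. That is, $R^b_{i\bar jk\bar\ell}=\sum_{a,b}c_{ab}(X_a)_{i\bar j}(X_b)_{k\bar\ell}$ with $X_a\in\mathfrak h$. Consequently $\sum_iR^b_{k\bar ii\bar\ell}$ is the matrix of $\sum_{a,b}c_{ab}X_bX_a$.

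Since $A$ is $\nab$-parallel, it commutes with every $R^b(x,\bar y)$, hence with both contractions. In the balanced case this gives $[A,B]=0$ at once. In general it only reduces the claim to showing that $A$ commutes with the $\eta$-terms $\phi+\phi^*$, where $\phi_{k\bar\ell}=\sum_rT^\ell_{kr}\bar\eta_r$. That step needs a further argument, which your proposal does not supply.
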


\medskip
\noindent\textbf{The pointwise Chern-torsion bracket.}
Fix $p\in M$ and put $V:=T_p^{1,0}M$. The Hermitian metric induced by $g$ on $V$ is written as $(x,y):=\langle x,\bar y\rangle$. Since $T$ is a tensor, its value at $p$ defines a skew-symmetric complex-bilinear map $T_p:\Lambda^2V\to V$.
By Ni \cite[Theorem 4.1]{NiHolonomy}, the rule
\begin{equation*}
[x,y]_c:=T_p(x,y),
\qquad
x,y\in V,
\end{equation*}
defines a complex Lie algebra structure on $V$. Since $\nab T=0$ and $\nab g=0$, Bismut parallel transport identifies these pointwise Hermitian Lie algebras by unitary Lie-algebra isomorphisms. The subscript $c$ stands for Chern.

For $x\in V$, let $L_x:V\to V$ be defined by $L_x(y)=[x,y]_c$. Thus $L_x=\operatorname{ad}_x$ is an inner derivation. We write $\operatorname{Der}(V,[\, ,\, ]_c)$ for the complex vector space of derivations, and $L_x^*$ for the adjoint with respect to the above Hermitian inner product. We also use
$$
\im T
=
\operatorname{span}_{\C}\{T(x,y):x,y\in V\}
=
[V,V]_c.
$$
For $Q\in\End(V)$ and a skew bilinear map $P:\Lambda^2V\to V$, define
\begin{equation}\label{eq:pi-action}
(\pi(Q)P)(x,y)
=
Q(P(x,y))-P(Qx,y)-P(x,Qy).
\end{equation}
Thus $Q$ is a derivation of $(V,[\, ,\, ]_c)$ exactly when $\pi(Q)T=0$. If $g$ is balanced, then \eqref{eq:balanced} gives $\tr L_x=0$ for every $x\in V$; equivalently, the pointwise Chern-torsion Lie algebra is unimodular.

Let $H:T^{1,0}M\rightarrow T^{1,0}M$ be a complex-linear
endomorphism satisfying $H^*=H$ and $\nabla^bH=0$, where the
adjoint is taken with respect to $g$. Extend $H$ to
$T^{0,1}M$ by complex conjugation, and denote by the same letter
the induced real endomorphism of $TM$. Then $HJ=JH$. We associate
with $H$ the real $2$-form
$$
\alpha_H(X,Y)=g(JHX,Y),\qquad X,Y\in TM.
$$
Indeed, since $H^*=H$, $J^*=-J$, and $HJ=JH$, one has
$(JH)^*=-JH$; hence $\alpha_H$ is skew-symmetric. Moreover,
$\alpha_H(JX,JY)=\alpha_H(X,Y)$, so $\alpha_H$ is of type
$(1,1)$. Equivalently, if $e={}^t\!(e_1,\ldots,e_n)$ is a local
$g$-unitary frame with dual coframe $\vphi$ and
$H_{i\bar j}=g(He_i,\bar e_j)$, then
$$
\alpha_H=\sqrt{-1}\sum_{i,j}H_{i\bar j}
\vphi_i\wedge\bar\vphi_j.
$$
The identity $H_{i\bar j}=\overline{H_{j\bar i}}$ implies
$\bar\alpha_H=\alpha_H$. If $H>0$, then
$g_H(X,Y)=g(HX,Y)$ is a Hermitian metric whose fundamental form is $\alpha_H$.

We first compute $\partial\alpha_H$ in terms of the Chern torsion of $g$. The resulting formula will be used to characterize the closedness of $\alpha_H$.

\begin{lemma}\label{lem:partial-alpha}
For any $x\in M$, choose a local unitary frame
$e={}^t\!(e_1,\ldots,e_n)$ near $x$ such that
$\theta^b|_x=0$ and $H$ is diagonal at $x$, namely
$He_i=h_ie_i$ with $h_i\in\R$. Then at $x$,
\begin{equation}\label{eq:partial-alpha}
\partial\alpha_H
=
\frac{\sqrt{-1}}2
\sum_{i,j,k}(h_i+h_j-h_k)T^k_{ij}\,
\vphi_i\wedge\vphi_j\wedge\bar\vphi_k.
\end{equation}
\end{lemma}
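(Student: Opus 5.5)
We compute $d\alpha_H$ directly from the coframe. Write $\alpha_H=\sqrt{-1}\sum_{i,j}H_{i\bar j}\vphi_i\wedge\bar\vphi_j$ in a local unitary frame. The Bismut connection is metric and $\nab H=0$, so in a frame with $\theta^b|_x=0$ the components satisfy $dH_{i\bar j}=0$ at $x$. Concretely, $dH_{i\bar j}=\sum_k(H_{k\bar j}\theta^b_{ik}+H_{i\bar k}\overline{\theta^b_{jk}})$ up to the paper's index conventions, and this vanishes at $x$. Hence at $x$
\[
d\alpha_H=\sqrt{-1}\sum_{i,j}H_{i\bar j}\bigl(d\vphi_i\wedge\bar\vphi_j-\vphi_i\wedge d\bar\vphi_j\bigr).
\]
After diagonalizing, $H_{i\bar j}=h_i\delta_{ij}$. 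It remains to express $d\vphi$ at $x$ using the Bismut connection.

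From \eqref{eq:Chern-structure-equations} and $\theta=\theta^b-\gamma$, we get $d\vphi=-{}^t\theta^b\wedge\vphi+{}^t\gamma\wedge\vphi+\tau$. At $x$ this reduces to $d\vphi_i=\sum_j\gamma_{ji}\wedge\vphi_j+\tau_i$. Insert \eqref{eq:gamma}, namely $\gamma_{ji}=\sum_k(T^i_{jk}\vphi_k-\overline{T^j_{ik}}\bar\vphi_k)$. The $(2,0)$ part of $d\vphi_i$ is
\[
\sum_{j,k}T^i_{jk}\vphi_k\wedge\vphi_j+\tau_i=-\tau_i,
\]
since $\tau_i=\frac12\sum T^i_{jk}\vphi_j\wedge\vphi_k$ and the double sum equals $-2\tau_i$. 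The $(1,1)$ part is $-\sum_{j,k}\overline{T^j_{ik}}\,\bar\vphi_k\wedge\vphi_j$. The $(2,1)$ component of $d\alpha_H$, which is $\partial\alpha_H$, collects two contributions:
\begin{itemize}
\item $\sqrt{-1}\sum_i h_i(-\tau_i)\wedge\bar\vphi_i$, coming from the $(2,0)$ part of $d\vphi_i$;
\item $-\sqrt{-1}\sum_i h_i\vphi_i\wedge(d\bar\vphi_i)^{(1,1)}$, where $(d\bar\vphi_i)^{(1,1)}=-\sum_{j,k}T^j_{ik}\vphi_k\wedge\bar\vphi_j$ is the conjugate of the above.
\end{itemize}

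The main (and really only) obstacle is the bookkeeping of signs and index relabelings. In the first term, relabel to the form $-\frac{\sqrt{-1}}2\sum h_kT^k_{ij}\vphi_i\wedge\vphi_j\wedge\bar\vphi_k$. In the second, we have $\sqrt{-1}\sum h_iT^j_{ik}\vphi_i\wedge\vphi_k\wedge\bar\vphi_j$. Rename the indices to get $\sqrt{-1}\sum h_iT^k_{ij}\vphi_i\wedge\vphi_j\wedge\bar\vphi_k$, then antisymmetrize in $i,j$ using $T^k_{ij}=-T^k_{ji}$. This yields $\frac{\sqrt{-1}}2\sum(h_i+h_j)T^k_{ij}\vphi_i\wedge\vphi_j\wedge\bar\vphi_k$. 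Adding the two pieces gives \eqref{eq:partial-alpha}.

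Two checks guard against convention errors:
\begin{itemize}
\item For $H=\mathrm{id}$, the formula must reduce to $\partial\om=\frac{\sqrt{-1}}2\sum T^k_{ij}\vphi_i\wedge\vphi_j\wedge\bar\vphi_k$, the standard expression in the conventions of \cite{YZ18Cur}.
\item The overall sign of the $\gamma$ contribution must be consistent with $\gamma=\theta^b-\theta$ acting by $\gamma e_i=\sum_j\gamma_{ij}e_j$.
\end{itemize}
If the $H=\mathrm{id}$ check fails, that points to the transpose convention in the structure equation, and we would recheck it there.
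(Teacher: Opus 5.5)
Your proof is correct and takes essentially the same route as the paper. You use a frame with $\theta^b|_x=0$ and $H$ diagonal at $x$, note that the $H_{i\bar j}$ have vanishing first derivatives at $x$, and use the Chern structure equation with $\theta=-\gamma$ at $x$, substituting \eqref{eq:gamma} to get the $-h_k$ contribution from $(d\vphi_k)^{2,0}=-\tau_k$ and the $h_i+h_j$ contribution from $(d\bar\vphi_k)^{1,1}$; your explicit check that the $(1,0)$-part of $\gamma$ turns $\tau_k$ into $-\tau_k$ is exactly the step the paper's sketch compresses into ``the torsion term contributes $-h_kT^k_{ij}$.''
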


\begin{proof}
Fix $x\in M$ and choose the local unitary frame $e$ as above.
Since $\theta^b|_x=0$, \eqref{eq:gamma} gives
$\theta=-\gamma$ at $x$. Moreover, since $\nab H=0$, the
coefficients of $H$ have vanishing first derivatives at $x$
with respect to this frame.  By \eqref{eq:Chern-structure-equations}, the first Chern structure equation is $d\vphi_k=-\sum_j\theta_{jk}\wedge\vphi_j+\tau_k$, where $\tau_k=\frac12\sum_{i,j}T^k_{ij}\vphi_i\wedge\vphi_j$. Starting from $\alpha_H=\sqrt{-1}\sum_kh_k\vphi_k\wedge\bar\vphi_k$, we have
$$
\partial\alpha_H=\sqrt{-1}\sum_kh_k\big(\partial\vphi_k\wedge\bar\vphi_k-\vphi_k\wedge\partial\bar\vphi_k\big).
$$
Substitute $\theta=-\gamma$ and the expression for $\gamma$ in \eqref{eq:gamma}. The torsion term in $\partial\vphi_k$ contributes $-h_kT^k_{ij}$ to the coefficient of $\vphi_i\wedge\vphi_j\wedge\bar\vphi_k$, while the two connection terms obtained from differentiating the holomorphic input factors contribute $h_iT^k_{ij}$ and $h_jT^k_{ij}$. After using $T^k_{ij}=-T^k_{ji}$ and collecting the two ordered copies of each pair $(i,j)$, the coefficient becomes $\frac{\sqrt{-1}}2(h_i+h_j-h_k)T^k_{ij}$, which is \eqref{eq:partial-alpha}.
\end{proof}

Formula \eqref{eq:partial-alpha} immediately gives the following characterization.
\begin{lemma}\label{lem:closed-derivation}
Let $H=H^*$ be $\nab$-parallel. The following conditions are equivalent:
\begin{enumerate}
\item $d\alpha_H=0$;
\item $\partial\alpha_H=0$;
\item $H\in\operatorname{Der}(T^{1,0}_pM,[\, ,\, ]_c)$ at every $p\in M$, that is,
$$
H[X,Y]_c=[HX,Y]_c+[X,HY]_c
$$
for all $X,Y\in T_p^{1,0}M$.
\end{enumerate}
In particular, if $H>0$ and one of these conditions holds, then $\alpha_H$ is a K\"ahler form.
\end{lemma}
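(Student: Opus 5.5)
The plan is to read everything off formula \eqref{eq:partial-alpha} of Lemma~\ref{lem:partial-alpha} and then pass between $\partial\alpha_H$ and $d\alpha_H$ using reality of $\alpha_H$. Fix an arbitrary point $p\in M$ and choose, as in Lemma~\ref{lem:partial-alpha}, a local unitary frame $e$ with $\theta^b|_p=0$ and $He_i=h_ie_i$ at $p$. This is possible: since $H^*=H$, it can be diagonalized at $p$ by a unitary change of frame. After that, the frame can be normalized along Bismut geodesics or by a gauge transformation so that $\theta^b|_p=0$ without changing its value at $p$.

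For (1)$\Leftrightarrow$(2): because $\bar\alpha_H=\alpha_H$ and $\alpha_H$ has type $(1,1)$, we have $d\alpha_H=\partial\alpha_H+\bar\partial\alpha_H$ with $\bar\partial\alpha_H=\overline{\partial\alpha_H}$. The two pieces have types $(2,1)$ and $(1,2)$, so $d\alpha_H=0$ exactly when $\partial\alpha_H=0$.

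For (2)$\Leftrightarrow$(3): by \eqref{eq:partial-alpha} at $p$, and since $T^k_{ij}=-T^k_{ji}$ and the coefficient $h_i+h_j-h_k$ is symmetric in $i,j$, the forms $\vphi_i\wedge\vphi_j\wedge\bar\vphi_k$ with $i<j$ are linearly independent. Hence $\partial\alpha_H|_p=0$ if and only if $(h_i+h_j-h_k)T^k_{ij}=0$ for all $i,j,k$. On the other hand, in the eigenbasis the $e_k$-component of $(\pi(H)T)(e_i,e_j)$ from \eqref{eq:pi-action} is
$$h_kT^k_{ij}-h_iT^k_{ij}-h_jT^k_{ij}=-(h_i+h_j-h_k)T^k_{ij}.$$
So vanishing of all these coefficients is exactly $\pi(H)T=0$ at $p$, i.e.\ $H[X,Y]_c=[HX,Y]_c+[X,HY]_c$ on $T^{1,0}_pM$. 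Since $p$ is arbitrary, (2) and (3) are equivalent. Both sides are tensorial, so checking them in one frame at each point suffices.

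For the final assertion: if $H>0$, then $g_H$ is a Hermitian metric with fundamental form $\alpha_H$, and $d\alpha_H=0$ says precisely that $g_H$ is K\"ahler.

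The only step that needs care is the linear-independence bookkeeping in (2)$\Rightarrow$(3). The ordered sum in \eqref{eq:partial-alpha} must first be rewritten as $\sqrt{-1}\sum_{i<j,k}(h_i+h_j-h_k)T^k_{ij}\,\vphi_i\wedge\vphi_j\wedge\bar\vphi_k$, using the symmetry of the coefficient and the antisymmetry of $T$. Only then can individual coefficients be read off. The diagonal terms $i=j$ vanish because $T^k_{ii}=0$.
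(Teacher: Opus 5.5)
Your proposal is correct and follows the paper's proof essentially verbatim. Both arguments do the same two things: (2)$\Leftrightarrow$(3) is read off from \eqref{eq:partial-alpha}, in a frame that diagonalizes $H$ with $\theta^b|_p=0$, by matching it against the $e_k$-component $(h_k-h_i-h_j)T^k_{ij}$ of $H[e_i,e_j]_c-[He_i,e_j]_c-[e_i,He_j]_c$; and (1)$\Leftrightarrow$(2) comes from the type decomposition $d\alpha_H=\partial\alpha_H+\overline{\partial\alpha_H}$ of the real $(1,1)$-form $\alpha_H$. Your explicit rewriting of the sum over $i<j$, which justifies reading off individual coefficients, is a step the paper leaves implicit, and you handle it correctly.
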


\begin{proof}
At a fixed point, diagonalize $H$ as in Lemma \ref{lem:partial-alpha}. The coefficient of $e_k$ in
$$
H[e_i,e_j]_c-[He_i,e_j]_c-[e_i,He_j]_c
$$
is $(h_k-h_i-h_j)T^k_{ij}$. Hence $H$ is a derivation if and only if $(h_i+h_j-h_k)T^k_{ij}=0$ for all $i,j,k$. By \eqref{eq:partial-alpha}, this is equivalent to $\partial\alpha_H=0$. Since $\alpha_H$ is real of type $(1,1)$, its exterior derivative decomposes as $d\alpha_H=\partial\alpha_H+\bar\partial\alpha_H$, with $\bar\partial\alpha_H=\overline{\partial\alpha_H}$. Thus $\partial\alpha_H=0$ if and only if $d\alpha_H=0$. If $H>0$, then $\alpha_H$ is positive, so closedness makes it a K\"ahler form.
\end{proof}

\vspace{0.3cm}

\section{Parallel vectors and closed Hermitian variations}

Throughout this section, $g$ denotes the balanced BTP metric fixed in Theorem \ref{thm:main}. We introduce the notation
$$
\mathcal P^b(M)
:=
\left\{
Z\in\Gamma(T^{1,0}M):
\nabla^b Z=0
\right\}
$$
for the complex vector space of global $(1,0)$-vector fields
parallel with respect to the Bismut connection of $g$. Since $\nabla^bJ=0$, the connection $\nabla^b$ is complex linear on $T^{1,0}M$, and hence $\mathcal P^b(M)$ is a complex vector space.

Since $M$ is connected, evaluation at any fixed point $p\in M$
identifies $\mathcal P^b(M)$ with the fixed-point subspace
$$
\left(T_p^{1,0}M\right)^{\operatorname{Hol}_p(\nabla^b)}
$$
of the full Bismut holonomy group; see
\cite[Chapter II]{KobayashiNomizuI}. In particular,
$\dim_{\C}\mathcal P^b(M)\leq n$. Moreover, every
$Z\in\mathcal P^b(M)$ has constant $g$-norm, since
$\nabla^bg=0$. Thus a nonzero element of $\mathcal P^b(M)$ has positive constant length.

For brevity, we write $\mathcal P=\mathcal P^b(M)$ throughout the remainder of the paper. For later use, we calculate the following consequence of the parallelness of $Z$ and the BTP curvature symmetry \eqref{eq:pair-symmetry}.

\begin{lemma}\label{lem:curvature-parallel-vector}
Let $Z\in{\mathcal P}$ and assume $|Z|=1$. Choose a local unitary frame with $e_n=Z$. Then
$$
R^b_{i\bar j n\bar\ell}
=
R^b_{i\bar j k\bar n}
=
R^b_{n\bar j k\bar\ell}
=
R^b_{i\bar n k\bar\ell}
=0
$$
for all $1\leq i,j,k,\ell\leq n$.
\end{lemma}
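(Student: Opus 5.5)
Since $Z=e_n$ is $\nab$-parallel, I will use the standard relation between parallel sections and curvature. Write $\nab e_k=\sum_\ell \theta^b_{k\ell}e_\ell$. Then $\nab e_n=0$ gives $\theta^b_{n\ell}=0$ for all $\ell$. Since $\nab g=0$ and the frame is unitary, $\theta^b$ is skew-Hermitian, so also $\theta^b_{\ell n}=0$. The structure equation $\Theta^b=d\theta^b-\theta^b\wedge\theta^b$ then gives $\Theta^b_{n\ell}=0$ and $\Theta^b_{\ell n}=0$ for all $\ell$. Equivalently, without frames, $R^b_{XY}Z=0$ for all real $X,Y$, because $\nab Z=0$. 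Using metric compatibility, $\langle R^b_{XY}W,\bar Z\rangle=-\langle W,R^b_{XY}\bar Z\rangle=0$, since $\bar Z$ is also parallel. Taking $X=e_i$, $Y=\bar e_j$, this yields the first two vanishings:
$$R^b_{i\bar j n\bar\ell}=\langle R^b(e_i,\bar e_j)e_n,\bar e_\ell\rangle=0,\qquad R^b_{i\bar j k\bar n}=\langle R^b(e_i,\bar e_j)e_k,\bar e_n\rangle=0.$$

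For the remaining two I will invoke the pair symmetry \eqref{eq:pair-symmetry} of Lemma \ref{prop:known-BTP}, namely $R^b_{i\bar jk\bar\ell}=R^b_{k\bar\ell i\bar j}$. This gives
$$R^b_{n\bar jk\bar\ell}=R^b_{k\bar\ell n\bar j},\qquad R^b_{i\bar nk\bar\ell}=R^b_{k\bar\ell i\bar n}.$$
The first right-hand side is of the form $R^b_{\cdot\,\cdot\, n\,\cdot}$ and the second of the form $R^b_{\cdot\,\cdot\,\cdot\,\bar n}$, so both vanish by the previous step. This completes the argument.

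I do not expect a real obstacle here. The one point needing care is that the curvature convention $R^b(x,y,z,w)=\langle R^b_{xy}z,w\rangle$ puts the parallel field in the third or fourth slot. The vanishing in the first two slots therefore genuinely requires the BTP pair symmetry, not merely the first Bianchi identity. The Bismut connection has torsion, so its Bianchi identity does not directly give such a symmetry.
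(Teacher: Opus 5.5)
Your proposal is correct and matches the paper's proof. Parallelism of $Z$ gives $R^b_{i\bar j n\bar\ell}=0$, and metric compatibility (skew-Hermitian curvature) gives $R^b_{i\bar j k\bar n}=0$; the BTP pair symmetry \eqref{eq:pair-symmetry} then transfers these vanishings to the first and second slots.
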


\begin{proof}
Since $\nab Z=0$, one has $R^b(X,Y)Z=0$, hence $R^b_{i\bar j n\bar\ell}=0$. Curvature endomorphisms of a Hermitian connection are skew-Hermitian, so $R^b_{i\bar j k\bar n}=0$. The pair symmetry in \eqref{eq:pair-symmetry} then gives the vanishing when the first or the second vector index is $n$. Conjugation gives the corresponding identities with barred indices.
\end{proof}

Combining Lemma \ref{lem:curvature-parallel-vector} with \eqref{eq:curvature-torsion} gives the following consequence for a Bismut-parallel vector field.
\begin{proposition}\label{prop:parallel-vector}
Let $(M^n,J,g)$ be balanced and BTP, and let $Z\in{\mathcal P}$. Then $Z$ is orthogonal to $\im T$, and $L_Z^*$ is a derivation of the pointwise Chern-torsion Lie algebra. Consequently, $H_Z:=\frac12(L_Z+L_Z^*)$ is a $\nab$-parallel Hermitian derivation and $\tr H_Z=0$.
\end{proposition}

\begin{proof}
The statement is trivial for $Z=0$. Suppose that $Z\neq0$.
Since $Z$ has constant length, after rescaling we may assume
$|Z|=1$. Choose a local unitary frame with $e_n=Z$.

We first prove $Z\perp\im T$. In \eqref{eq:curvature-torsion}, put $j=i$ and $k=\ell=n$. The left hand side is zero by Lemma \ref{lem:curvature-parallel-vector}. We get
\begin{equation}\label{eq:first-sum}
0=\sum_s\left\{|T^n_{is}|^2+|T^i_{ns}|^2-|T^s_{in}|^2-T^i_{is}\overline{T^n_{ns}}-T^n_{ns}\overline{T^i_{is}}\right\}.
\end{equation}
Now sum over $i$. The second and third double sums cancel: after interchanging $i$ and $s$, the third one becomes $\sum_{i,s}|T^i_{sn}|^2=\sum_{i,s}|T^i_{ns}|^2$. For the fourth term, first sum over $i$ and use balancedness:
$$
\sum_{i,s}T^i_{is}\overline{T^n_{ns}}=\sum_s\left(\sum_iT^i_{is}\right)\overline{T^n_{ns}}=0.
$$
The fifth term is its complex conjugate. Hence \eqref{eq:first-sum}, summed over $i$, reduces to $\sum_{i,s}|T^n_{is}|^2=0$. Therefore
\begin{equation}\label{eq:no-Z-output}
T^n_{is}=0\qquad\text{for all }i,s.
\end{equation}
This is exactly $Z\perp\im T$.

We next prove that $L_Z^*$ is a derivation. Put $j=n$ in \eqref{eq:curvature-torsion}. Both curvature terms vanish by Lemma \ref{lem:curvature-parallel-vector}. The two terms containing an upper index $n$ vanish by \eqref{eq:no-Z-output}. After multiplying by $-1$, the remaining identity is
\begin{equation}\label{eq:adjoint-derivation}
\sum_s\left\{T^s_{ik}\overline{T^s_{n\ell}}+T^\ell_{ks}\overline{T^i_{ns}}-T^\ell_{is}\overline{T^k_{ns}}\right\}=0.
\end{equation}
The matrix of $L_n=L_Z$ is $(L_n)^a_{\ b}=T^a_{nb}$, so $(L_n^*)^\ell_{\ s}=\overline{T^s_{n\ell}}$. Using the action $\pi$ defined in \eqref{eq:pi-action}, a direct component calculation gives
$$
(\pi(L_n^*)T)^\ell_{ik}=\sum_s\left\{T^s_{ik}\overline{T^s_{n\ell}}+T^\ell_{ks}\overline{T^i_{ns}}-T^\ell_{is}\overline{T^k_{ns}}\right\}.
$$
Thus \eqref{eq:adjoint-derivation} says $\pi(L_n^*)T=0$, namely, $L_n^*$ is a derivation.

Since $L_Z$ is an inner derivation, $H_Z=(L_Z+L_Z^*)/2$ is a Hermitian derivation. Since $L_Z=T(Z,\cdot)$, the Leibniz rule for covariant
derivatives gives
$$
(\nabla^b_XL_Z)(Y)
=
(\nabla^b_XT)(Z,Y)+T(\nabla^b_XZ,Y).
$$
Both terms vanish because $\nabla^bT=0$ and $\nabla^bZ=0$.
Hence $\nabla^bL_Z=0$.

Moreover, since $\nabla^bg=0$, we have $\nabla^b_X(L_Z^*)=(\nabla^b_XL_Z)^*.$ Thus $\nabla^bL_Z^*=0$. Consequently,
$$
\nabla^bH_Z
=
\frac12\bigl(\nabla^bL_Z+\nabla^bL_Z^*\bigr)
=0.
$$
Finally, balancedness gives $\tr L_Z=0$, and therefore $\tr H_Z=\re\,\tr L_Z=0 $.
\end{proof}

\begin{corollary}\label{cor:closed-variations}
For every $Z\in{\mathcal P}$, the real $(1,1)$-form $\alpha_{H_Z}$ is closed and $\nab$-parallel.
\end{corollary}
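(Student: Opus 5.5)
The corollary follows directly from Proposition \ref{prop:parallel-vector} and Lemma \ref{lem:closed-derivation}. Fix $Z\in\mathcal P$ and set $H=H_Z=\frac12(L_Z+L_Z^*)$. Proposition \ref{prop:parallel-vector} gives three facts: $H$ is Hermitian, i.e. $H^*=H$ as an endomorphism of $T^{1,0}M$ with respect to $g$; $H$ is $\nab$-parallel; and at every point $p$, $H$ is a derivation of $(T^{1,0}_pM,[\,,\,]_c)$. The first two facts are exactly what is needed to define the real $(1,1)$-form $\alpha_H$ as in the preliminaries. The derivation property is condition (3) of Lemma \ref{lem:closed-derivation}, so that lemma yields $d\alpha_H=0$.

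It remains to show that $\alpha_H$ is $\nab$-parallel. By definition $\alpha_H(X,Y)=g(JHX,Y)$, where $H$ is extended to $TM$ by conjugation. The Bismut connection is Hermitian, so $\nab g=0$ and $\nab J=0$. Also $\nab H=0$ on $T^{1,0}M$, and because $\nab$ commutes with complex conjugation, the conjugate extension to $T^{0,1}M$ is parallel too; hence the induced real endomorphism of $TM$ is parallel. Applying the Leibniz rule to the contraction $g(J\,H\,\cdot,\cdot)$ then gives
\[
(\nab_W\alpha_H)(X,Y)=g\bigl((\nab_WJ)HX,Y\bigr)+g\bigl(J(\nab_WH)X,Y\bigr)+(\nab_Wg)(JHX,Y)=0 .
\]
Equivalently, in a unitary frame the coefficients $H_{i\bar j}$ are parallel, so $\alpha_H=\sqrt{-1}\sum H_{i\bar j}\vphi_i\wedge\bar\vphi_j$ is parallel.

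There is no real obstacle here. The only point needing care is that $\nab$-parallelism of $H$ on $T^{1,0}M$ carries over to the real extension; this holds because $\nab$ is a real connection preserving $J$, and therefore respects the type decomposition and conjugation.
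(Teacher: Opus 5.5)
Your proposal is correct and follows essentially the same route as the paper: Proposition \ref{prop:parallel-vector} supplies that $H_Z$ is Hermitian, $\nabla^b$-parallel and a derivation, Lemma \ref{lem:closed-derivation} then gives $d\alpha_{H_Z}=0$, and the Leibniz rule with $\nabla^b g=0$, $\nabla^b J=0$ gives $\nabla^b\alpha_{H_Z}=0$. Your added remark that parallelism of $H_Z$ on $T^{1,0}M$ passes to its conjugate-extended real endomorphism is a detail the paper leaves implicit.
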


\begin{proof}
By Proposition \ref{prop:parallel-vector}, $H_Z$ is a Hermitian
endomorphism, a derivation of the pointwise Chern-torsion Lie
algebra, and satisfies $\nabla^bH_Z=0$. Since $H_Z^*=H_Z$ and
$H_ZJ=JH_Z$, the tensor
$\alpha_{H_Z}(X,Y)=g(JH_ZX,Y)$ is a real $(1,1)$-form.

Moreover, the Bismut connection preserves both $g$ and $J$.
Hence, for all vector fields $U,X,Y$,
$$
(\nabla^b_U\alpha_{H_Z})(X,Y)
=
g\bigl(J(\nabla^b_UH_Z)X,Y\bigr)=0.
$$
Thus $\nabla^b\alpha_{H_Z}=0$.

Finally, $H_Z$ is a derivation. Lemma
\ref{lem:closed-derivation} therefore gives
$d\alpha_{H_Z}=0$.
\end{proof}

\vspace{0.3cm}

\section{Holonomy averaging of the pluriclosed metric}

Throughout this section, we assume that $M^n$ is a compact complex manifold, $g$ is a balanced BTP Hermitian metric on $M$, and $h$ is a pluriclosed Hermitian metric on $M$. Let $\omega_h$ denote the fundamental form of $h$. Since $g$ and $h$ are two Hermitian metrics on $T^{1,0}M$, there exists a unique smooth complex-linear endomorphism $G:T^{1,0}M\to T^{1,0}M$ such that $h(U,\bar V)=g(GU,\bar V)$ for all $U,V\in T^{1,0}M$. The Hermitian symmetry and positivity of $h$ imply that $G^*=G$ and
$G>0$, where the adjoint is taken with respect to $g$. Extending $G$ to $T^{0,1}M$ by complex conjugation, and hence to a real endomorphism of $TM$, we have $GJ=JG$. Thus, for real tangent vectors $X,Y$, one has $h(X,Y)=g(GX,Y)$ and
$\omega_h(X,Y)=g(JGX,Y)$. In general, $G$ need not be
$\nabla^b$-parallel. Our goal is to replace $G$ by a positive
$\nabla^b$-parallel Hermitian endomorphism while preserving the integral pairings needed below.

Fix a point $p\in M$ and a $g$-unitary frame
$u_0:\C^n\to T_p^{1,0}M$. Let $g_0$ denote the complex-bilinear extension of the standard Euclidean metric on $\C^n$, with the normalization
$g_0(\epsilon_i,\overline{\epsilon_j})=\delta_{ij}$ for the
standard unitary basis. Thus a frame $u:\C^n\to T_x^{1,0}M$ is unitary precisely when $g(uv,\overline{uw})=g_0(v,\bar w)$ for all $v,w\in\C^n$. Represent the full Bismut holonomy group through $u_0$ by
$$
\mathcal H_0
=
\left\{
u_0^{-1}P_\lambda u_0:
\lambda\text{ is a piecewise smooth loop based at }p
\right\}
\subset U(n),
$$
where $P_\lambda$ denotes $\nabla^b$-parallel transport. By the
reduction theorem of Kobayashi and Nomizu
\cite[Chapter II, Section 7, Theorem 7.1]{KobayashiNomizuI},
the unitary frames which can be joined to $u_0$ by
$\nabla^b$-horizontal curves form a reduced principal subbundle $\mathcal Q$ of the unitary frame bundle, with structure group $\mathcal H_0$. More explicitly, for $x\in M$,
$$
\mathcal Q_x
=
\left\{
P_\gamma\circ u_0:
\gamma\text{ is a piecewise smooth curve from }p\text{ to }x
\right\}.
$$
If $u,u'\in\mathcal Q_x$, then $u'=ub$ for some
$b\in\mathcal H_0$. For the averaging argument, set
$\mathcal H=\overline{\mathcal H_0}^{\,U(n)}$. Since
$\mathcal H$ is a closed subgroup of the compact group $U(n)$,
it is compact and therefore carries a normalized Haar probability measure $d\mu$. Since $\mathcal H$ is compact, the normalized Haar measure is bi-invariant.

For $x\in M$, choose $u\in\mathcal Q_x$. Then $u^{-1}G_xu$ is a positive Hermitian endomorphism of $\C^n$. We let $a\in\mathcal H$ denote the Haar integration variable and
define
\begin{equation}\label{eq:fiber-average}
\widehat G_x
=
\int_{\mathcal H}
a^{-1}u^{-1}G_xua\,d\mu(a).
\end{equation}
This definition is independent of the choice of
$u\in\mathcal Q_x$. Indeed, if $u'=ub$ with
$b\in\mathcal H_0\subset\mathcal H$, then
$$
\int_{\mathcal H}
a^{-1}(u')^{-1}G_xu'a\,d\mu(a)
=
\int_{\mathcal H}
(ba)^{-1}u^{-1}G_xu(ba)\,d\mu(a),
$$
which equals $\widehat G_x$ after the change of variable
$c=ba$ and the left invariance of Haar measure. Since
$\mathcal Q$ admits local smooth sections, formula
\eqref{eq:fiber-average} also shows that
$x\mapsto\widehat G_x$ is smooth. We now define
\begin{equation}\label{eq:base-average}
K_0
=
\frac1{\Vol_g(M)}
\int_M\widehat G_x\,dV_g(x),
\end{equation}
where $\Vol_g(M)=\int_MdV_g$.

\begin{lemma}
\label{lem:average-pairing}
The endomorphism $K_0$ defined in
\eqref{eq:base-average} is positive Hermitian and commutes with
$\mathcal H$. Consequently, it determines a unique global
positive Hermitian endomorphism $K:T^{1,0}M\to T^{1,0}M$ with
$K_p=u_0K_0u_0^{-1}$ and $\nabla^bK=0$. More explicitly, for
$x\in M$ and $u\in\mathcal Q_x$, one has
$$
K_x=uK_0u^{-1}.
$$
If $F$ is any $\nabla^b$-parallel Hermitian endomorphism, then
\begin{equation}\label{eq:average-endomorphism}
\tr(F_pK_p)
=
\frac1{\Vol_g(M)}
\int_M\tr(F_xG_x)\,dV_g(x).
\end{equation}
Equivalently, under the natural trace pairing, if $\Psi$ is
any real $\nabla^b$-parallel $(n-1,n-1)$-form, then
\begin{equation}\label{eq:average-form}
\int_M\Psi\wedge\omega_h
=
\int_M\Psi\wedge\alpha_K,
\end{equation}
where $\alpha_K(X,Y)=g(JKX,Y)$.
\end{lemma}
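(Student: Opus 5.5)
First I check the properties of $K_0$. Each $\widehat G_x$ is an average of positive Hermitian matrices $a^{-1}u^{-1}G_xua$ with $a\in\mathcal H\subset U(n)$, so it is positive Hermitian. It commutes with every $b\in\mathcal H$: by the right invariance of Haar measure, $b^{-1}\widehat G_xb=\int a'^{-1}u^{-1}G_xua'\,d\mu$ with $a'=ab$, which equals $\widehat G_x$. The base average $K_0$ of \eqref{eq:base-average} is an average of positive Hermitian matrices commuting with $\mathcal H$ (the integrand is smooth, so the integral makes sense), and so it inherits both properties.

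Next I define $K_x=uK_0u^{-1}$ for $u\in\mathcal Q_x$. This does not depend on $u$, because $u'=ub$ with $b\in\mathcal H_0$ and $b$ commutes with $K_0$. Since $u$ is unitary, $K_x$ is positive Hermitian. Smoothness follows from local smooth sections of $\mathcal Q$. For parallelism, take any curve $\gamma$ from $x$ to $y$ and $u\in\mathcal Q_x$. Then $P_\gamma u\in\mathcal Q_y$, so $K_y=P_\gamma K_xP_\gamma^{-1}$. Hence $K$ is invariant under $\nabla^b$-parallel transport, i.e.\ $\nabla^bK=0$. Taking $u=u_0$ gives $K_p=u_0K_0u_0^{-1}$. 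Uniqueness is clear: a parallel section is determined by its value at $p$.

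For \eqref{eq:average-endomorphism}, the same reasoning applied to a parallel $F$ shows that $F_0:=u^{-1}F_xu$ is the same for all $x$ and all $u\in\mathcal Q_x$. Indeed, $F_y=P_\gamma F_xP_\gamma^{-1}$, so $F_0=u_0^{-1}F_pu_0$ commutes with $\mathcal H_0$, and by continuity with $\mathcal H$. Then
$$
\tr(F_xG_x)=\tr(F_0\,u^{-1}G_xu)=\tr\bigl(a^{-1}F_0a\,a^{-1}u^{-1}G_xua\bigr)=\tr\bigl(F_0\,a^{-1}u^{-1}G_xua\bigr)
$$
for each $a\in\mathcal H$. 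Integrating over $a$ gives $\tr(F_xG_x)=\tr(F_0\widehat G_x)$. Integrating over $M$ and using linearity of the trace gives $\frac{1}{\Vol_g(M)}\int_M\tr(F_xG_x)\,dV_g=\tr(F_0K_0)=\tr(F_pK_p)$.

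Finally, for \eqref{eq:average-form} I translate between forms and endomorphisms, and this is the step needing the most care. A real $(n-1,n-1)$-form $\Psi$ corresponds, via $\star_g$ (or contraction with $\om^{n-1}$), to a real $(1,1)$-form $\psi$, i.e.\ to a Hermitian endomorphism $F$ with $\psi=\alpha_F$. This correspondence uses only $g$ and $J$, which are $\nabla^b$-parallel, so $\Psi$ is parallel iff $F$ is. Pointwise, for any Hermitian $Q$ one has $\Psi\wedge\alpha_Q=c_n\tr(FQ)\,dV_g$ with a universal constant $c_n$; this can be checked in a unitary frame diagonalizing nothing in particular, by linearity in $Q$. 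Applying this with $Q=G$ (so $\alpha_G=\omega_h$) and with $Q=K$ gives
$$
\int_M\Psi\wedge\omega_h=c_n\int_M\tr(FG)\,dV_g=c_n\Vol_g(M)\tr(F_pK_p)=c_n\int_M\tr(FK)\,dV_g=\int_M\Psi\wedge\alpha_K,
$$
where $\tr(FK)$ is constant because both $F$ and $K$ are parallel. Apart from pinning down the pointwise pairing formula, the only subtle point is invariance under the closure $\mathcal H$ rather than $\mathcal H_0$, which is handled by continuity.
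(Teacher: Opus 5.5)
Your proposal is correct and follows the paper's proof essentially step by step. You use Haar invariance for positivity and for commutation with $\mathcal H$, define $K_x=uK_0u^{-1}$ with $K_y=P_\gamma K_xP_\gamma^{-1}$, get commutation of $F_0$ with $\mathcal H$ by closure, and average to $\tr(F_xG_x)=\tr(F_0\widehat G_x)$. The only difference is that in the last step you construct the parallel Hermitian endomorphism representing $E\mapsto\Psi\wedge\alpha_E$ explicitly via $\star_g$, whereas the paper obtains it from the nondegeneracy of the trace pairing.

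Drop your parenthetical alternative. Under an identification through powers of $\om$, e.g.\ $\Psi=\alpha_F\wedge\om^{n-2}$, one gets $\Psi\wedge\alpha_Q=(n-2)!\,\bigl(\tr F\,\tr Q-\tr(FQ)\bigr)\,dV_g$ rather than a multiple of $\tr(FQ)$. So the pairing formula you state holds for the Hodge-star identification only.
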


\begin{proof}
We first prove positivity. For every nonzero $v\in\C^n$, the
unitarity of $u$ and $a\in\mathcal H\subset U(n)$ gives
$$
\begin{aligned}
g_0(\widehat G_xv,\bar v)
&=
\int_{\mathcal H}
g_0\bigl(a^{-1}u^{-1}G_xuav,\bar v\bigr)\,d\mu(a)\\
&=
\int_{\mathcal H}
g\bigl(G_x(uav),\overline{uav}\bigr)\,d\mu(a)>0.
\end{aligned}
$$
The last inequality follows from $G_x>0$. Moreover, every
$a^{-1}u^{-1}G_xua$ is Hermitian, so $\widehat G_x$ is positive
Hermitian. Hence $K_0$ is also positive Hermitian.

We next prove the $\mathcal H$-invariance. For
$b\in\mathcal H$, the right invariance of the normalized Haar
measure gives
$$
b^{-1}\widehat G_xb
=
\int_{\mathcal H}
(ab)^{-1}u^{-1}G_xu(ab)\,d\mu(a)
=
\widehat G_x.
$$
Thus $\widehat G_x$ commutes with every element of $\mathcal H$.
Integrating over $M$ gives $b^{-1}K_0b=K_0$ for every
$b\in\mathcal H$.

We now define the global endomorphism $K$. If
$u,u'\in\mathcal Q_x$, then $u'=ub$ for some
$b\in\mathcal H_0\subset\mathcal H$. Since $K_0$ commutes with
$\mathcal H$, one has
$u'K_0(u')^{-1}=uK_0u^{-1}$. Hence
$K_x=uK_0u^{-1}$ is independent of the choice of
$u\in\mathcal Q_x$ and defines a smooth positive Hermitian
endomorphism of $T^{1,0}M$.

To see that $K$ is $\nabla^b$-parallel, let $\gamma$ be a
piecewise smooth curve from $x$ to $y$. If
$u\in\mathcal Q_x$, then $P_\gamma u\in\mathcal Q_y$, and hence
$$
K_y
=
(P_\gamma u)K_0(P_\gamma u)^{-1}
=
P_\gamma K_xP_\gamma^{-1}.
$$
Thus $K$ is preserved by $\nabla^b$-parallel transport, so
$\nabla^bK=0$. Its uniqueness with the prescribed value
$K_p=u_0K_0u_0^{-1}$ follows from uniqueness of parallel
transport on the connected manifold $M$.

Now let $F$ be a $\nabla^b$-parallel Hermitian endomorphism and put $F_0=u_0^{-1}F_pu_0$. If $u=P_\gamma u_0\in\mathcal Q_x$, then the parallelness of $F$ gives $u^{-1}F_xu=F_0$. Moreover, $F_0$ commutes with $\mathcal H_0$: parallel transport around a loop based at $p$ preserves $F_p$. Since the commutant of $F_0$ in $U(n)$ is closed and $\mathcal H=\overline{\mathcal H_0}^{\,U(n)}$, $F_0$ also commutes with every element of $\mathcal H$. Therefore, for every $a\in\mathcal H$,
$$
\tr\bigl(F_0a^{-1}u^{-1}G_xua\bigr)
=
\tr\bigl(aF_0a^{-1}u^{-1}G_xu\bigr)
=
\tr(F_xG_x).
$$
Averaging over $\mathcal H$ yields
$\tr(F_0\widehat G_x)=\tr(F_xG_x)$. Hence
$$
\tr(F_pK_p)
=
\tr(F_0K_0)
=
\frac1{\Vol_g(M)}
\int_M\tr(F_xG_x)\,dV_g(x),
$$
which proves \eqref{eq:average-endomorphism}.

It remains to prove \eqref{eq:average-form}. At each point
$x\in M$, the trace pairing $(A,E)\mapsto\tr(AE)$ is
nondegenerate on the real vector space of Hermitian
endomorphisms of $T_x^{1,0}M$. Therefore there is a unique
Hermitian endomorphism $F_{\Psi,x}$ such that
$$
\Psi\wedge\alpha_E
=
\tr(F_\Psi E)\,dV_g
$$
for every Hermitian endomorphism $E$. Since $\Psi$ is
$\nabla^b$-parallel and $\nabla^b$ preserves $g$, $J$, and
$dV_g$, the defining identity is preserved by parallel
transport. By uniqueness, $F_\Psi$ is
$\nabla^b$-parallel.

Since $\omega_h=\alpha_G$, we have
$$
\int_M\Psi\wedge\omega_h
=
\int_M\tr(F_\Psi G)\,dV_g.
$$
On the other hand, both $F_\Psi$ and $K$ are
$\nabla^b$-parallel, so $\tr(F_\Psi K)$ is constant on $M$.
Consequently,
$$
\int_M\Psi\wedge\alpha_K
=
\Vol_g(M)\tr(F_{\Psi,p}K_p).
$$
Applying \eqref{eq:average-endomorphism} to $F=F_\Psi$ gives
$$
\Vol_g(M)\tr(F_{\Psi,p}K_p)
=
\int_M\tr(F_{\Psi,x}G_x)\,dV_g(x).
$$
Combining the last three identities proves
\eqref{eq:average-form}.
\end{proof}

\vspace{0.3cm}

\section{Determinant maximization and balancedness}

Throughout this section, we assume that $M^n$ is a compact complex manifold, $g$ is a balanced BTP Hermitian metric on $M$, and $h$ is a pluriclosed Hermitian metric on $M$, $K$ is the positive
$\nab$-parallel Hermitian endomorphism obtained in
Lemma \ref{lem:average-pairing}.
Let
$$
{\mathcal E}=\operatorname{span}_{\R}\{H_Z:Z\in{\mathcal P}\}.
$$
This is a finite-dimensional real vector space because a global parallel tensor is determined by its value at one point. By Proposition \ref{prop:parallel-vector} and Corollary \ref{cor:closed-variations}, every $H\in{\mathcal E}$ is Hermitian, $\nab$-parallel, trace-free, and a derivation; moreover $d\alpha_H=0$.

Consider the open convex set in the affine space $K+\mathcal E$
\begin{equation}\label{eq:cone}
{\mathcal C}=\{D=K+H:H\in{\mathcal E},\ D>0\}.
\end{equation}
Here $D>0$ means that $D$ is positive definite as a Hermitian endomorphism. Since every element of $K+\mathcal E$ is $\nab$-parallel, its trace, determinant, and eigenvalues are constant on the connected manifold $M$. Thus $D\mapsto\det D$ may be regarded as a real-valued function on the finite-dimensional affine space $K+\mathcal E$, while $D\mapsto\log\det D$ is well defined on the positive open subset $\mathcal C$.

\begin{lemma}
\label{lem:det-max}
The function $D\mapsto\log\det D$ attains its maximum on $\mathcal C$ at a point which is interior relative to the affine space $K+\mathcal E$.
\end{lemma}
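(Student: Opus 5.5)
Since every element of $K+\mathcal E$ is $\nab$-parallel, I will work at the single point $p$ and regard $K+\mathcal E$ as a finite-dimensional affine subspace of the real vector space of Hermitian endomorphisms of $V=T_p^{1,0}M$. The set $\mathcal C$ is then the intersection of this affine subspace with the open cone of positive definite Hermitian endomorphisms. It is nonempty because $K\in\mathcal C$, and it is convex and relatively open in $K+\mathcal E$. The plan has three steps: show that $\mathcal C$ is bounded, show that $\log\det$ tends to $-\infty$ at the relative boundary, and conclude that the supremum is attained at a relatively interior point.

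\emph{Step 1: boundedness.} The key input is the trace-free property from Proposition \ref{prop:parallel-vector}: every $H\in\mathcal E$ satisfies $\tr H=0$. Hence $\tr D=\tr K$ for every $D\in\mathcal C$. A positive definite Hermitian endomorphism with eigenvalues $\lambda_1,\dots,\lambda_n>0$ has operator norm at most $\sum_i\lambda_i=\tr D=\tr K$. So $\mathcal C$ is contained in a fixed bounded set, and its closure $\overline{\mathcal C}$ in $K+\mathcal E$ is compact. I expect this to be the main point of the argument. Without the trace-free property the affine slice could be unbounded, and $\det$ could be unbounded as well. The balanced hypothesis enters exactly here, through $\tr L_Z=0$.

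\emph{Step 2: extension to the closure.} The function $\det$ is continuous on the compact set $\overline{\mathcal C}$. This set lies in the closed cone of positive semidefinite endomorphisms, and $\det\ge0$ there. Points of $\overline{\mathcal C}\setminus\mathcal C$ are semidefinite but not definite, so $\det=0$ at such points. On the other hand, $\det K>0$.

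\emph{Step 3: conclusion.} By compactness, $\det$ attains its maximum on $\overline{\mathcal C}$ at some $D$ with $\det D\ge\det K>0$. By Step 2, $D$ cannot lie in $\overline{\mathcal C}\setminus\mathcal C$, so $D\in\mathcal C$. Since $\log$ is increasing, $D$ also maximizes $\log\det$ on $\mathcal C$. Finally, $\mathcal C$ is open in $K+\mathcal E$: positive definiteness is an open condition, since the smallest eigenvalue varies continuously. Therefore $D$ is interior relative to the affine space $K+\mathcal E$, which is what the statement asserts.

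Strict concavity of $\log\det$ on the positive cone also shows that the maximizer is unique. This is not needed for the statement, but it may be convenient later.
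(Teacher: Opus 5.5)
Your proposal is correct and follows essentially the same route as the paper: the trace-free property of $\mathcal E$ (coming from balancedness) bounds $\overline{\mathcal C}$ inside $K+\mathcal E$, $\det$ attains its maximum on this compact set, and the maximum cannot lie on the relative boundary, where $\det$ vanishes while $\det K>0$. Your uniqueness remark via strict concavity of $\log\det$ is also correct, though the paper neither states nor uses it.
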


\begin{proof}
The set $\mathcal C$ is nonempty, since $K\in\mathcal C$.
Moreover, every $D=K+H\in\mathcal C$ satisfies
$\tr D=\tr K$, because $\tr H=0$ for all $H\in\mathcal E$.
The same identity holds on the closure
$\overline{\mathcal C}$ taken in the finite-dimensional affine
space $K+\mathcal E$.

Every $D\in\overline{\mathcal C}$ is positive semidefinite.
Hence, if $\lambda_1,\ldots,\lambda_n$ are its eigenvalues, then
$\lambda_i\geq0$ and
$\sum_i\lambda_i=\tr K$. In particular,
$0\leq\lambda_i\leq\tr K$ for every $i$, so
$\overline{\mathcal C}$ is bounded. The closure $\overline{\mathcal C}$ is closed by definition. Conversely, if $D\in K+\mathcal E$ is positive semidefinite, then $(1-t)D+tK\in\mathcal C$ for every $0<t<1$, and this path converges to $D$ as $t\to0$. Hence
$$
\overline{\mathcal C}=(K+\mathcal E)\cap\{D\geq0\}.
$$
Thus $\overline{\mathcal C}$ is closed and bounded in the finite-dimensional affine space $K+\mathcal E$, and therefore compact.

The continuous function $D\mapsto\det D$ thus attains its maximum
at some $D_0\in\overline{\mathcal C}$. Since $K>0$, one has
$\det K>0$. On the other hand, every point of the relative
boundary of $\mathcal C$ is singular and hence has determinant
zero. Therefore $D_0$ cannot lie on the boundary, and so
$D_0\in\mathcal C$. Since the logarithm is strictly increasing
on $(0,\infty)$, $D_0$ also maximizes $\log\det D$ on
$\mathcal C$. As $\mathcal C$ is open in $K+\mathcal E$, the
point $D_0$ is an interior point relative to this affine space.
\end{proof}

For a positive $\nab$-parallel endomorphism $D$, define the complex-linear $1$-form
\begin{equation}\label{eq:weighted-character}
\chi_D(X)=\tr(D^{-1}L_X).
\end{equation}
The trace is the ordinary complex trace on $\End(T^{1,0}M)$. Because $D$, $D^{-1}$, and $T$ are $\nab$-parallel, $\chi_D$ is also $\nab$-parallel. We write
$\langle X,Y\rangle_g:=g(X,\bar Y)$ for the Hermitian
inner product on $T^{1,0}M$, taken to be complex-linear in the
first variable. By the Hermitian Riesz representation theorem,
there is a unique $(1,0)$-vector field $Z_D$ such that
$\chi_D(X)=\langle X,Z_D\rangle_g
=g(X,\overline{Z_D})$.

For a positive Hermitian endomorphism $D$, denote by $g_D$  the
Hermitian metric given by
$g_D(U,\bar V)=g(DU,\bar V)$ for $U,V\in T^{1,0}M$. Its
fundamental form is
$\alpha_D(X,Y)=g(JDX,Y)$. We denote by $T^D$ the Chern torsion
of $g_D$ and by $\eta^D$ its Gauduchon torsion $1$-form. Throughout
the argument, $T$ continues to denote the Chern torsion of the
original balanced BTP metric $g$.

\begin{lemma}
\label{lem:balanced-character}
Let $D=D^*>0$ be $\nab$-parallel. Then the following conditions
are equivalent:
\begin{enumerate}
\item the Hermitian metric $g_D$ is balanced;
\item the Gauduchon torsion $1$-form $\eta^D$ of $g_D$ vanishes;
\item $\chi_D=0$.
\end{enumerate}
\end{lemma}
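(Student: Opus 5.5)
(1)$\Leftrightarrow$(2) holds by definition: by \eqref{eq:Gauduchon-formula} applied to $g_D$, the metric $g_D$ is balanced exactly when $\eta^D=0$. The substance is (2)$\Leftrightarrow$(3). The plan is to compute $\partial(\alpha_D^{n-1})$ directly and compare it with $\eta^D\wedge\alpha_D^{n-1}$, working pointwise in the frame of Lemma \ref{lem:partial-alpha}.

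Fix $x\in M$ and take a $g$-unitary frame with $\theta^b|_x=0$ and $De_i=d_ie_i$, where $d_i>0$. The vectors $\tilde e_i=d_i^{-1/2}e_i$ then form a $g_D$-unitary frame at $x$, with dual coframe $\tilde\vphi_i=d_i^{1/2}\vphi_i$. Since $D$ is $\nab$-parallel, we may take $\tilde e_i$ to be a local frame whose constant rescaling is parallel to first order at $x$. Lemma \ref{lem:partial-alpha} applied with $H=D$ gives at $x$
$$
\partial\alpha_D=\frac{\sqrt{-1}}2\sum_{i,j,k}(d_i+d_j-d_k)T^k_{ij}\,\vphi_i\wedge\vphi_j\wedge\bar\vphi_k .
$$
We then write $\partial(\alpha_D^{n-1})=(n-1)\,\partial\alpha_D\wedge\alpha_D^{n-2}$ and wedge with $\alpha_D^{n-2}=(n-2)!\sum_{|I|=n-2}\prod_{m\in I}(\sqrt{-1}d_m\vphi_m\wedge\bar\vphi_m)$. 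Only the terms of $\partial\alpha_D$ with $k\in\{i,j\}$ survive. Taking $k=i$, the coefficient becomes $(d_i+d_j-d_i)T^i_{ij}=d_jT^i_{ij}$, so the $e_j$-direction contributes a multiple of $\sum_i T^i_{ij}\,d_j\cdot(\text{stuff depending on }i)$.

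Comparing with $\eta^D\wedge\alpha_D^{n-1}$, which is the standard identity in the $g_D$-unitary frame, gives $\eta^D_j=\sum_i (T^D)^i_{ij}$ in the tilde frame. The quickest route is to compute $T^D$ directly. The Chern connection of $g_D$ differs from that of $g$ by $\partial\log$ of the metric matrix. Because $D$ is $\nab$-parallel but not Chern-parallel, the computation is precisely Lemma \ref{lem:partial-alpha}. In the tilde frame this gives $\tilde T^k_{ij}$ proportional to $d_k^{-1}(d_i+d_j-d_k)T^k_{ij}$ times rescaling factors. The trace over $k=i$ then yields, up to the positive factor $d_j^{1/2}$,
$$
\eta^D_j \;\propto\; \sum_i d_i^{-1}\,d_j\,T^i_{ij}\;=\;d_j\sum_i d_i^{-1}T^i_{ji}\cdot(-1).
$$
The sum $\sum_i d_i^{-1}T^i_{ji}$ equals $\tr(D^{-1}L_{e_j})=\chi_D(e_j)$. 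Hence $\eta^D_j=-c\,d_j^{1/2}\chi_D(e_j)$ for a nonzero universal constant $c$, and in particular $\eta^D=0$ if and only if $\chi_D=0$.

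The main obstacle is the bookkeeping in this step: obtaining the correct surviving coefficient, and confirming that the $d_i+d_j-d_k$ factors collapse for $k=i$ to something proportional to $d_j$ times $d_i^{-1}T^i_{ij}$, rather than leaving a leftover term $\sum_i T^i_{ij}$. In the latter case the balancedness of $g$ would have to be used. Here balancedness enters only to kill any such unweighted trace $\sum_iT^i_{ij}$ that might appear. I will check the normalization by verifying that $D=\mathrm{Id}$ reproduces $\eta=\chi_{\mathrm{Id}}$-type consistency, which holds since $g$ is balanced and $\tr L_X=0$.
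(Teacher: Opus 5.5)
Your proposal is correct. Once you drop the $\partial(\alpha_D^{n-1})$ wedge computation, it is essentially the paper's proof: pass to the $g_D$-unitary coframe $\psi_i=d_i^{1/2}\varphi_i$, read off $(T^D)^k_{ij}=(d_i+d_j-d_k)(d_id_jd_k)^{-1/2}T^k_{ij}$ by comparison with Lemma~\ref{lem:partial-alpha}, and trace over $k=i$ to obtain $\eta^D_j=-\sqrt{d_j}\,\chi_D(e_j)$. The collapse $d_i+d_j-d_i=d_j$ is exact, so your constant is $c=1$ and the balancedness of $g$ is not needed in this lemma.
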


\begin{proof}
Fix $p\in M$ and choose a local $g$-unitary frame $e={}^t\!(e_1,\ldots,e_n)$ such that $\theta^b|_p=0$ and $De_i=d_ie_i$, where $d_i>0$. Set $\widetilde e_i=d_i^{-1/2}e_i$ and $\psi_i=d_i^{1/2}\vphi_i$. Then $\{\widetilde e_i\}$ is $g_D$-unitary with dual coframe $\{\psi_i\}$. Comparing \eqref{eq:partial-alpha} for $H=D$ with the expression for $\partial\alpha_D$ in this coframe gives
\begin{equation}\label{eq:TD-components}
(T^D)^k_{ij}=\frac{d_i+d_j-d_k}{\sqrt{d_id_jd_k}}T^k_{ij}.
\end{equation}
Thus, writing $\eta^D=\sum_j\eta^D_j\psi_j$, we have
\begin{equation}\label{eq:etaD-components}
\eta^D_j=\sum_i(T^D)^i_{ij}=\sqrt{d_j}\sum_i\frac1{d_i}T^i_{ij}.
\end{equation}
Since a Hermitian metric is balanced exactly when its Gauduchon torsion $1$-form vanishes, {\rm (i)} and {\rm (ii)} are equivalent.

On the other hand, $\chi_D(e_j)=\tr(D^{-1}L_{e_j})=\sum_i d_i^{-1}T^i_{ji}=-\sum_i d_i^{-1}T^i_{ij}$. Hence
\begin{equation}\label{eq:weighted-balanced}
\eta^D_j=-\sqrt{d_j}\,\chi_D(e_j),
\qquad 1\leq j\leq n.
\end{equation}
Since $d_j>0$ and $\{e_1,\ldots,e_n\}$ is a basis, $\eta^D=0$ if and only if $\chi_D=0$. Thus {\rm (ii)} and {\rm (iii)} are equivalent.
\end{proof}

We now return to the determinant maximizer. The first variation of $\log\det$ in the directions supplied by the parallel derivations $H_Z$ forces $\chi_D$ to vanish.
\begin{proposition}
\label{prop:det-balanced}
Under the assumptions of Theorem \ref{thm:main}, let $D$ be an interior maximizer of $\log\det$ on
$\mathcal C$, whose existence is given by
Lemma \ref{lem:det-max}. Then $\chi_D=0$. Consequently,
$\alpha_D$ is a positive balanced Hermitian form. Moreover,
$\alpha_D-\alpha_K$ is closed.
\end{proposition}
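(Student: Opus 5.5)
The argument has three parts. The first-order condition at the maximizer gives $\chi_D=0$. Lemma \ref{lem:balanced-character} then turns this into balancedness. Closedness of $\alpha_D-\alpha_K$ follows from Corollary \ref{cor:closed-variations}.

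\textbf{First variation.} Since $D$ is interior relative to $K+\mathcal E$ (Lemma \ref{lem:det-max}), for every $Z\in\mathcal P$ the curve $t\mapsto D+tH_Z$ stays in $\mathcal C$ for small $|t|$. The derivative of $\log\det$ along it vanishes at $t=0$:
$$
0=\frac{d}{dt}\Big|_{t=0}\log\det(D+tH_Z)=\tr(D^{-1}H_Z).
$$
Write $H_Z=\frac12(L_Z+L_Z^*)$. Because $D^{-1}$ is Hermitian, $\tr(D^{-1}L_Z^*)=\overline{\tr(L_ZD^{-1})}=\overline{\tr(D^{-1}L_Z)}$. Hence the condition reads
$$
\re\,\chi_D(Z)=0 \quad\text{for all } Z\in\mathcal P.
$$
The space $\mathcal P$ is complex, so replacing $Z$ by $\sqrt{-1}Z$ (using $L_{\sqrt{-1}Z}=\sqrt{-1}L_Z$) kills the imaginary part as well. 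Thus $\chi_D(Z)=0$ for every $Z\in\mathcal P$.

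\textbf{From $\mathcal P$ to all of $T^{1,0}M$.} Let $Z_D$ be the vector field with $\chi_D(X)=\langle X,Z_D\rangle_g$. Since $\chi_D$ is $\nab$-parallel and $\nab$ preserves $g$, the field $Z_D$ is $\nab$-parallel, so $Z_D\in\mathcal P$. Applying the previous step with $Z=Z_D$ gives
$$
0=\chi_D(Z_D)=|Z_D|^2,
$$
so $Z_D=0$ and therefore $\chi_D=0$. This self-referential trick, which uses that the Riesz dual of $\chi_D$ is itself parallel, is the main point of the argument. It is the step I would verify most carefully: the parallelness of $\chi_D$ requires $D^{-1}$ and $T$ to be parallel, and $D^{-1}$ is parallel because $D=K+H$ with $K$ and $H$ both parallel.

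\textbf{Conclusion.} By Lemma \ref{lem:balanced-character}, $\chi_D=0$ means $g_D$ is balanced. Since $D>0$, $\alpha_D$ is a positive balanced Hermitian form. Finally, $D-K=H\in\mathcal E$ is a real combination $\sum c_iH_{Z_i}$. The map $H\mapsto\alpha_H$ is linear, so $\alpha_D-\alpha_K=\sum c_i\alpha_{H_{Z_i}}$, and each $\alpha_{H_{Z_i}}$ is closed by Corollary \ref{cor:closed-variations}.
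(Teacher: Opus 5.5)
Your proposal is correct and follows the paper's proof. As there, you take a first variation of $\log\det$ at the interior maximizer in the direction $H_{Z_D}$, where $Z_D\in\mathcal P$ is the parallel Riesz dual of $\chi_D$; this yields $|Z_D|_g^2=\re\,\chi_D(Z_D)=0$, and Lemma \ref{lem:balanced-character} and Corollary \ref{cor:closed-variations} finish the argument. Your intermediate step, which uses $\sqrt{-1}Z$ to get $\chi_D(Z)=0$ for all $Z\in\mathcal P$, is valid but unnecessary: $\chi_D(Z_D)=|Z_D|_g^2$ is already real, so the paper applies the real-part condition directly at $Z=Z_D$.
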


\begin{proof}
Let $Z_D$ be the $(1,0)$-vector field determined by
$\chi_D(X)=g(X,\overline{Z_D})$. Since $\chi_D$ and $g$ are
$\nab$-parallel, $Z_D$ is $\nab$-parallel, and hence
$Z_D\in\mathcal P$. By the definition of $\mathcal E$, we have
$H_{Z_D}\in\mathcal E$.

Since $D>0$ and positivity is an open condition,
$D+tH_{Z_D}\in\mathcal C$ for all sufficiently small
$t\in\mathbb R$. As $D$ is an interior maximum of
$\log\det$, we have
$$
0
=
\frac{d}{dt}\bigg|_{t=0}
\log\det(D+tH_{Z_D})
=
\tr(D^{-1}H_{Z_D}).
$$
Since $(D^{-1})^*=D^{-1}$ and the trace is cyclic, we get
$$
\tr(D^{-1}L_{Z_D}^*)
=
\tr(L_{Z_D}^*D^{-1})
=
\overline{\tr(D^{-1}L_{Z_D})}.
$$
Using $H_{Z_D}=(L_{Z_D}+L_{Z_D}^*)/2$, we obtain
$$
\begin{aligned}
\tr(D^{-1}H_{Z_D})
&=
\frac12\left(
\tr(D^{-1}L_{Z_D})
+
\tr(D^{-1}L_{Z_D}^*)
\right)\\
&=
\re\,\tr(D^{-1}L_{Z_D})
=
\re\,\chi_D(Z_D).
\end{aligned}
$$
By the defining relation for $Z_D$,
$\chi_D(Z_D)=g(Z_D,\overline{Z_D})=|Z_D|_g^2$.
Consequently, $|Z_D|_g^2=0$, and hence $Z_D=0$ and
$\chi_D=0$.

Lemma \ref{lem:balanced-character} now implies that
$\alpha_D$ is balanced. Since $D>0$, it is a positive
Hermitian form. Finally, $D-K\in\mathcal E$. By linearity and Corollary \ref{cor:closed-variations}, every $H\in\mathcal E$ satisfies $d\alpha_H=0$. Therefore
$d\alpha_{D-K}=0$. Since
$\alpha_{D-K}=\alpha_D-\alpha_K$, the last assertion follows.
\end{proof}

\vspace{0.3cm}

\section{Parallel forms and Gauduchon identities}

Throughout this section, we assume that $M^n$ is a compact complex manifold, $g$ is a balanced BTP Hermitian metric on $M$, and $h$ is a pluriclosed Hermitian metric on $M$. We will need to differentiate several $\nabla^b$-parallel forms. The BTP condition implies that $\nabla^b$-parallel forms remain
$\nabla^b$-parallel after applying $d$, $\partial$, or
$\bar\partial$.

\begin{lemma}
\label{lem:parallel-d}
Let $\beta\in\Omega^q(M,\C)$ satisfy $\nab\beta=0$. Then
$d\beta$, $\partial\beta$, and $\bar\partial\beta$ are all
$\nab$-parallel.
\end{lemma}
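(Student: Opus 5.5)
The plan is to express $d$ through the Bismut connection and its torsion, and then use that every ingredient is $\nab$-parallel. For any torsion connection $\nabla$ on $TM$, the exterior derivative of a $q$-form is
\begin{equation*}
d\beta(X_0,\ldots,X_q)=\sum_{i}(-1)^i(\nabla_{X_i}\beta)(\ldots,\widehat{X_i},\ldots)+\sum_{i<j}(-1)^{i+j}\beta\bigl(T^{\nabla}(X_i,X_j),X_0,\ldots,\widehat{X_i},\ldots,\widehat{X_j},\ldots\bigr).
\end{equation*}
This follows by writing $[X_i,X_j]=\nabla_{X_i}X_j-\nabla_{X_j}X_i-T^\nabla(X_i,X_j)$ in the invariant formula for $d$. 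I will apply it with $\nabla=\nab$ and $T^\nabla=\Tb$. If $\nab\beta=0$, the first sum vanishes, so $d\beta=\mathcal{A}(\beta,\Tb)$ is an algebraic contraction of $\beta$ with $\Tb$: a tensor product followed by a contraction and an antisymmetrization.

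Next I use that $\nab$ commutes with tensor products and contractions (Leibniz rule). Since $\nab\beta=0$ by hypothesis and $\nab\Tb=0$ by the BTP assumption, it follows that $\nab(d\beta)=\mathcal{A}(\nab\beta,\Tb)+\mathcal{A}(\beta,\nab\Tb)=0$.

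For $\partial\beta$ and $\bar\partial\beta$, I first reduce to pure type. Because $\nab J=0$, parallel transport preserves the type decomposition, so each component $\beta^{r,s}$ of $\beta$ is again $\nab$-parallel. It therefore suffices to treat $\beta$ of pure type $(r,s)$. Then $\partial\beta=(d\beta)^{r+1,s}$ and $\bar\partial\beta=(d\beta)^{r,s+1}$. Since $d\beta$ is parallel and type projection commutes with $\nab$ (again because $\nab J=0$), both components are parallel. The general case follows by summing over types.

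I don't expect a genuine obstacle here. The only step needing care is the torsion formula for $d$, including sign conventions. Those signs do not affect the conclusion: whatever constant coefficients appear, $d\beta$ is still a universal contraction of $\beta\otimes\Tb$, and that is all the argument uses. One could equally use the Chern torsion, since $\Tb$ is determined algebraically by $T$ and $J$ and $\nab T=0$ by Lemma \ref{prop:known-BTP}.
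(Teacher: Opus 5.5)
Your proposal is correct and follows the paper's proof essentially step for step. You express $d$ through $\nabla^b$ and $T^b$, note that only the algebraic torsion term survives when $\nabla^b\beta=0$, and then use $\nabla^b J=0$ to pass to the type components $\partial\beta$ and $\bar\partial\beta$.

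One small point: with your substitution $[X_i,X_j]=\nabla_{X_i}X_j-\nabla_{X_j}X_i-T^\nabla(X_i,X_j)$, the torsion sum carries the sign $(-1)^{i+j+1}$ rather than $(-1)^{i+j}$. As you already observed, this does not affect the argument.
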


\begin{proof}
Since $g$ is BTP, we have $\nab\Tb=0$. For real vector fields
$X_0,\ldots,X_q$, the exterior derivative of $\beta$ is given by
\begin{equation}\label{eq:d-bismut-torsion}
\begin{aligned}
d\beta(X_0,\ldots,X_q)
={}&
\sum_{a=0}^q(-1)^a
(\nabla^b_{X_a}\beta)
(X_0,\ldots,\widehat{X_a},\ldots,X_q)\\
&+
\sum_{0\leq a<b\leq q}(-1)^{a+b+1}
\beta\bigl(
\Tb(X_a,X_b),
X_0,\ldots,\widehat{X_a},\ldots,
\widehat{X_b},\ldots,X_q
\bigr),
\end{aligned}
\end{equation}
where $\Tb(X,Y)=\nabla^b_XY-\nabla^b_YX-[X,Y]$.

Since $\nab\beta=0$, the first sum vanishes. Thus $d\beta$ is
obtained algebraically from the two $\nab$-parallel tensors
$\beta$ and $\Tb$ by tensor products, permutations, and
contractions. Hence
$$
\nab(d\beta)=0.
$$

Since $\nab J=0$, the Bismut connection preserves the
bidegree decomposition of complex differential forms. Writing
$\beta=\sum_{r+s=q}\beta^{r,s}$, each $\beta^{r,s}$ is
$\nab$-parallel, and hence so is $d\beta^{r,s}$. As $J$ is
integrable,
$$
d\beta^{r,s}
=
\partial\beta^{r,s}
+
\bar\partial\beta^{r,s}.
$$
The two terms are the $(r+1,s)$ and $(r,s+1)$ components of
$d\beta^{r,s}$, respectively. Since the corresponding type
projections are $\nab$-parallel, both terms are
$\nab$-parallel. Summing over $r,s$ gives
$$
\nab(\partial\beta)=0,
\qquad
\nab(\bar\partial\beta)=0.
$$
This completes the proof of the lemma. \end{proof}

We apply this observation to the balanced form $\alpha_D$ constructed in the previous section. The pluriclosed metric $h$, together with the averaging identity, gives one additional Gauduchon condition for $\alpha_D$.
\begin{proposition}
\label{prop:additional-gauduchon}
Assume $n\geq3$. Let $D$ be the determinant-maximizing endomorphism obtained in
Proposition \ref{prop:det-balanced}, and set
$$
\Phi_D=\sqrt{-1}\,\partial\bar\partial\alpha_D^{n-2}.
$$
Then $\Phi_D$ is a real $\nabla^b$-parallel
$(n-1,n-1)$-form and
$$
\partial\bar\partial\alpha_D^{n-2}\wedge\alpha_D=0.
$$
Equivalently, the Hermitian metric with fundamental form
$\alpha_D$ is $(n-2)$-Gauduchon.
\end{proposition}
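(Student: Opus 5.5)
The plan is to prove first that $\Phi_D$ is real and $\nabla^b$-parallel. Then the pairing identity \eqref{eq:average-form} of Lemma \ref{lem:average-pairing}, together with integration by parts against the pluriclosed form $\omega_h$, should show that $\int_M\Phi_D\wedge\alpha_D=0$. Finally, since the integrand is a constant multiple of $dV_g$, this pointwise constant must vanish.

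\emph{Step 1 (parallelism and reality).} Since $D$ is $\nabla^b$-parallel and $\nabla^b$ preserves $g$ and $J$, the form $\alpha_D$ is $\nabla^b$-parallel, hence so is $\alpha_D^{n-2}$. Apply Lemma \ref{lem:parallel-d} twice, first to get $\bar\partial\alpha_D^{n-2}$ parallel and then $\partial\bar\partial\alpha_D^{n-2}$ parallel. Thus $\Phi_D$ is a parallel $(n-1,n-1)$-form. It is real because $\alpha_D$ is real, so
$$\overline{\sqrt{-1}\,\partial\bar\partial\alpha_D^{n-2}}=-\sqrt{-1}\,\bar\partial\partial\alpha_D^{n-2}=\sqrt{-1}\,\partial\bar\partial\alpha_D^{n-2}.$$

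\emph{Step 2 (averaging identity).} Apply \eqref{eq:average-form} with $\Psi=\Phi_D$:
$$\int_M\Phi_D\wedge\alpha_K=\int_M\Phi_D\wedge\omega_h=\int_M\sqrt{-1}\,\alpha_D^{n-2}\wedge\partial\bar\partial\omega_h=0.$$
The middle equality is Stokes' theorem on the compact manifold $M$; moving $\partial$ and $\bar\partial$ across produces no sign issues, since both $\alpha_D^{n-2}$ and $\omega_h$ have even degree. The last equality is the pluriclosed condition. Next replace $\alpha_K$ by $\alpha_D$. By Proposition \ref{prop:det-balanced}, $\beta:=\alpha_D-\alpha_K$ is closed, so
$$\int_M\Phi_D\wedge\beta=\int_M\sqrt{-1}\,\partial\bar\partial\bigl(\alpha_D^{n-2}\bigr)\wedge\beta=\pm\int_M\sqrt{-1}\,\alpha_D^{n-2}\wedge\partial\bar\partial\beta=0.$$
Hence $\int_M\Phi_D\wedge\alpha_D=0$.

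\emph{Step 3 (pointwise vanishing).} The form $\Phi_D\wedge\alpha_D$ is a $\nabla^b$-parallel real top form. Since $\nabla^b$ preserves $dV_g$, we can write $\Phi_D\wedge\alpha_D=c\,dV_g$ with $c$ a constant on the connected manifold $M$. The integral vanishes, so $c=0$, which gives $\partial\bar\partial\alpha_D^{n-2}\wedge\alpha_D=0$. This is exactly the $(n-2)$-Gauduchon condition of Fu--Wang--Wu with $k=n-2$, since $n-k-1=1$.

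I expect no serious obstacle here. The only point needing care is Step 2: checking that \eqref{eq:average-form} applies, which requires $\Phi_D$ to be real, parallel and of type $(n-1,n-1)$, all established in Step 1. It is also worth noting that the hypothesis $n\geq3$ is what makes $\alpha_D^{n-2}$ a nonconstant power of $\alpha_D$.
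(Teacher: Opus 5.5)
Your proposal is correct and follows the paper's proof essentially step for step. The common route is: reality of $\Phi_D$, and parallelism via Lemma~\ref{lem:parallel-d} applied twice; then the averaging identity \eqref{eq:average-form} combined with Stokes' theorem against the pluriclosed form $\omega_h$; then closedness of $\alpha_D-\alpha_K$, whose $(2,1)$ and $(1,2)$ parts vanish separately so that $\partial\bar\partial(\alpha_D-\alpha_K)=0$, to replace $\alpha_K$ by $\alpha_D$; and finally the constant-multiple-of-$dV_g$ argument for the parallel top form $\Phi_D\wedge\alpha_D$. The sign you leave as $\pm$ in the second integration by parts is actually $+$, though this is irrelevant since the integral vanishes.
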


\begin{proof}
Since $D$ is $\nab$-parallel and $\nab$ preserves $g$ and
$J$, the form $\alpha_D$ is $\nab$-parallel. Hence
$\alpha_D^{n-2}$ is also $\nab$-parallel. Applying
Lemma \ref{lem:parallel-d} to $\alpha_D^{n-2}$, we obtain that
$\bar\partial\alpha_D^{n-2}$ is $\nab$-parallel. Applying the
same lemma once more to
$\bar\partial\alpha_D^{n-2}$ shows that
$\partial\bar\partial\alpha_D^{n-2}$ is
$\nab$-parallel. Therefore $\Phi_D$ is a
$\nab$-parallel $(n-1,n-1)$-form.

Since $\alpha_D^{n-2}$ is real and
$\partial\bar\partial+\bar\partial\partial=0$, one has
$\overline{\Phi_D}=\Phi_D$. Thus $\Phi_D$ is real.

The auxiliary Hermitian metric $h$ is pluriclosed, so
$\partial\bar\partial\omega_h=0$. By Stokes' theorem, applied
successively to $\partial$ and $\bar\partial$, one has
$$
\int_M\partial\bar\partial\alpha_D^{n-2}\wedge\omega_h
=
\int_M\alpha_D^{n-2}\wedge\partial\bar\partial\omega_h
=
0.
$$
Consequently,
\begin{equation}\label{eq:PhiD-omega-h}
\int_M\Phi_D\wedge\omega_h=0.
\end{equation}

Since $\Phi_D$ is a real $\nabla^b$-parallel
$(n-1,n-1)$-form, the holonomy-averaging identity
\eqref{eq:average-form} gives
\begin{equation}\label{eq:PhiD-alpha-K}
\int_M\Phi_D\wedge\alpha_K=0.
\end{equation}

By Proposition \ref{prop:det-balanced}, the real $(1,1)$-form
$\alpha_D-\alpha_K$ is closed. Since
$$
0=d(\alpha_D-\alpha_K)
=\partial(\alpha_D-\alpha_K)+\bar\partial(\alpha_D-\alpha_K),
$$
and the two summands have bidegrees $(2,1)$ and $(1,2)$,
respectively, they vanish separately. Another integration by
parts therefore gives
\begin{equation}\label{eq:PhiD-difference}
\begin{aligned}
\int_M\Phi_D\wedge(\alpha_D-\alpha_K)
&=
\sqrt{-1}\int_M\partial\bar\partial\alpha_D^{n-2}
\wedge(\alpha_D-\alpha_K)\\
&=
\sqrt{-1}\int_M\alpha_D^{n-2}\wedge
\partial\bar\partial(\alpha_D-\alpha_K)
=0.
\end{aligned}
\end{equation}
Combining \eqref{eq:PhiD-alpha-K} and
\eqref{eq:PhiD-difference}, we find
$$
\int_M\Phi_D\wedge\alpha_D=0.
$$

Finally, both $\Phi_D$ and $\alpha_D$ are
$\nabla^b$-parallel, and hence so is the top-degree form
$\Phi_D\wedge\alpha_D$. Since $\nabla^b$ is a metric connection,
$dV_g$ is $\nabla^b$-parallel. As $M$ is connected, there is a
constant $c$ such that
$\Phi_D\wedge\alpha_D=c\,dV_g$. Integrating over $M$ gives
$c\Vol_g(M)=0$, and therefore $c=0$. Hence
$\Phi_D\wedge\alpha_D=0$ pointwise. Since
$\Phi_D=\sqrt{-1}\,\partial\bar\partial\alpha_D^{n-2}$, this is
equivalent to
$$
\partial\bar\partial\alpha_D^{n-2}\wedge\alpha_D=0.
$$
Thus $\alpha_D$ is $(n-2)$-Gauduchon.
\end{proof}

For the final positivity argument we use the following special case of the Hodge--Riemann bilinear relations. We include the short verification in order to fix the sign under our normalization.
\begin{lemma}[{\cite[Chapter V]{Wells}}]
\label{lem:HR}
Let $V$ be a Hermitian vector space of complex dimension
$n\geq3$, and let $\rho$ be its fundamental form. If $\beta$ is
a primitive $(2,1)$-form, namely $\Lambda_\rho\beta=0$, then
there exists a constant $c_n>0$, depending only on the
normalization of $\rho$, such that
$$
-\sqrt{-1}\,
\beta\wedge\bar\beta\wedge\rho^{n-3}
=
c_n|\beta|_\rho^2\,dV_\rho.
$$
In particular, the left hand side is nonnegative and vanishes
if and only if $\beta=0$.
\end{lemma}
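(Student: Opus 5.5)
Since both sides depend only pointwise on $\beta$ and $\rho$, I will verify the identity on $V\cong\C^n$ with a unitary basis and $\rho=\sqrt{-1}\sum_i\vphi_i\wedge\bar\vphi_i$. The plan has three steps: describe primitive $(2,1)$-forms in coordinates, compute the wedge product term by term, and use primitivity to remove the cross terms, leaving a positive multiple of $|\beta|^2_\rho\,dV_\rho$.

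\textbf{Step 1: coordinates.} Write
$$
\beta=\tfrac12\sum_{i,j,k}\beta_{ij\bar k}\,\vphi_i\wedge\vphi_j\wedge\bar\vphi_k,\qquad \beta_{ij\bar k}=-\beta_{ji\bar k}.
$$
Up to a nonzero constant, $\Lambda_\rho\beta$ is the $(1,0)$-form with coefficients $\sum_k\beta_{ik\bar k}$. So primitivity means
$$
\sum_k\beta_{ik\bar k}=0\quad\text{for every } i.
$$

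\textbf{Step 2: the wedge product.} For $n\ge3$,
$$
\rho^{n-3}=(\sqrt{-1})^{n-3}(n-3)!\sum_{|S|=n-3}\prod_{s\in S}\vphi_s\wedge\bar\vphi_s .
$$
A term of $\beta\wedge\bar\beta$ survives the wedge with the factor indexed by $S$ only if it fills the complementary triple $\{a,b,c\}$ of indices. Taking $\beta$ with index set $\{i,j\}$ and conjugate index $k$, and $\bar\beta$ with conjugate indices $\{l,m\}$ and unconjugated index $r$, we need $\{i,j\}\cup\{r\}=\{a,b,c\}=\{l,m\}\cup\{k\}$ as sets. Fix a triple $\{a,b,c\}$. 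There are two kinds of contributions.
\begin{itemize}
\item[(a)] Diagonal terms: $\{i,j\}=\{l,m\}$ with $k=r$ the third index. These contribute $|\beta_{ab\bar c}|^2$ with $a,b,c$ distinct.
\item[(b)] Cross terms: $k\in\{i,j\}$. These give products $\beta_{ac\bar c}\overline{\beta_{ab\bar b}}$, which involve the trace-type components.
\end{itemize}
Components with repeated indices, such as $\beta_{ab\bar a}$, enter only through the cross terms (b).

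\textbf{Step 3: using primitivity.} Sum over all triples and group the cross terms by the free index $a$. They assemble into
$$
-\Big|\sum_c\beta_{ac\bar c}\Big|^2+\sum_c|\beta_{ac\bar c}|^2 ,
$$
with the same overall sign convention as the diagonal terms. Primitivity kills the first piece, and the second piece is exactly the repeated-index part of $|\beta|^2_\rho$. The diagonal terms (a) give the distinct-index part of $|\beta|^2_\rho$. Both pieces come with the same sign, which is the content of the identity. Combining them gives
$$
-\sqrt{-1}\,\beta\wedge\bar\beta\wedge\rho^{n-3}=c_n|\beta|^2_\rho\,dV_\rho ,
$$
where $c_n$ is a product of factorials and the sign is to be determined.

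\textbf{Main obstacle: the sign.} The delicate point is fixing the sign, and in particular checking that the diagonal and repeated-index contributions have the same sign rather than opposite ones. My first move is to test the identity on the case $n=3$, $\beta=\vphi_1\wedge\vphi_2\wedge\bar\vphi_3$. Here
$$
\beta\wedge\bar\beta=\vphi_1\wedge\vphi_2\wedge\bar\vphi_3\wedge\bar\vphi_1\wedge\bar\vphi_2\wedge\vphi_3 .
$$
I will reorder this to $\prod_i(\vphi_i\wedge\bar\vphi_i)$, track the permutation sign, and compare with $dV_\rho=\rho^3/3!=(\sqrt{-1})^3\prod_i\vphi_i\wedge\bar\vphi_i$. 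That fixes the sign of the $-\sqrt{-1}$ convention. I will then confirm the relative sign on a primitive repeated-index example, $\beta=\vphi_1\wedge(\vphi_2\wedge\bar\vphi_2-\vphi_3\wedge\bar\vphi_3)$. Alternatively, the whole statement follows from the standard Hodge--Riemann relation $(\sqrt{-1})^{p-q}(-1)^{k(k-1)/2}\beta\wedge\bar\beta\wedge\rho^{n-k}>0$ for primitive $(p,q)$-forms with $k=p+q$ \cite[Chapter V]{Wells}. With $p=2$, $q=1$, $k=3$ the prefactor is $\sqrt{-1}\cdot(-1)^3=-\sqrt{-1}$, which matches the sign in the statement.

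The final assertion is then immediate. Since $c_n>0$ and $dV_\rho>0$, the left-hand side is nonnegative, and it vanishes exactly when $|\beta|_\rho=0$, that is, when $\beta=0$.
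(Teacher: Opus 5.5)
Your proposal is correct, but your main line takes a different route from the paper's. The paper does no computation. It quotes the primitive $(2,1)$ case of the Hodge--Riemann relations as the Hodge-star identity $*_\rho\bar\beta=-c_n'\sqrt{-1}\,\bar\beta\wedge\rho^{n-3}$ for primitive forms, then writes $|\beta|^2_\rho dV_\rho=\beta\wedge*_\rho\bar\beta$. The sign is simply inherited from the reference. That is exactly your fallback, with $(\sqrt{-1})^{p-q}(-1)^{k(k-1)/2}=-\sqrt{-1}$.

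Your coordinate route instead reduces to index triples and makes visible what primitivity is used for. The sign bookkeeping you deferred does come out as you asserted in Step 3. On $\C^3$, write $\beta=\sum_{l,k}B_{lk}\,\psi_l\wedge\bar\varphi_k$ with $\psi_1=\varphi_2\wedge\varphi_3$, $\psi_2=\varphi_3\wedge\varphi_1$, $\psi_3=\varphi_1\wedge\varphi_2$. One finds $-\sqrt{-1}\,\beta\wedge\bar\beta=\bigl(\sum_{l,k}B_{lk}\overline{B_{kl}}\bigr)dV_\rho$. Summing over triples gives, for every $(2,1)$-form $\beta$,
\[
-\sqrt{-1}\,\beta\wedge\bar\beta\wedge\rho^{n-3}
=(n-3)!\Bigl(|\beta|_\rho^2-\sum_a\Bigl|\sum_c\beta_{ac\bar c}\Bigr|^2\Bigr)dV_\rho ,
\]
with $|\beta|^2_\rho=\sum_{i<j}\sum_k|\beta_{ij\bar k}|^2$. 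So $c_n=(n-3)!$ in your normalization. For $n=3$ your two test forms give $dV_\rho$ and $2\,dV_\rho$, consistent with this.

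If you fix the signs by examples rather than by a general computation, you need to say why two examples suffice. Relabeling the unitary basis preserves $\rho$ and $dV_\rho$ and permutes the components $\beta_{ij\bar k}$ without signs. Hence all distinct-index terms share one coefficient, and all cross terms $\beta_{ab\bar b}\overline{\beta_{ac\bar c}}$ share another.

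Your route buys a self-contained check of the sign, which is the only delicate point and which the paper's citation hides in the constant $-c'_n\sqrt{-1}$. It also gives the explicit defect term for nonprimitive $\beta$. The paper's route buys brevity.
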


\begin{proof}
This is the primitive $(2,1)$ case of the Hodge--Riemann bilinear
relations; see \cite[Chapter V]{Wells}. Equivalently, the Hodge-star
formula for the primitive $(1,2)$-form $\bar\beta$ has the form
$$
*_\rho\bar\beta=-c'_n\sqrt{-1}\,\bar\beta\wedge\rho^{n-3}
$$
for a constant $c'_n>0$ depending only on the normalization of
$\rho$. Hence
$$
|\beta|_\rho^2dV_\rho
=\beta\wedge *_\rho\bar\beta
=-c'_n\sqrt{-1}\,\beta\wedge\bar\beta\wedge\rho^{n-3}.
$$
Taking $c_n=(c'_n)^{-1}>0$ gives the stated identity. The final
assertion follows immediately.
\end{proof}

\vspace{0.3cm}

\section{Proof of the main theorem}

\begin{proof}[Proof of Theorem \ref{thm:main}]
Let $M^n$ be a compact complex manifold. Suppose that $M$ admits a balanced Bismut torsion-parallel
Hermitian metric $g$ and a pluriclosed Hermitian metric $h$. If $n=1$, every Hermitian metric is K\"ahler. If $n=2$, the balanced condition gives $d\omega=0$, so $g$ is already K\"ahler. We therefore assume $n\geq3$.

Let $K$ be the positive $\nabla^b$-parallel endomorphism obtained in Lemma \ref{lem:average-pairing}, and let $D$ be the determinant-maximizing endomorphism given by Lemma \ref{lem:det-max} and Proposition \ref{prop:det-balanced}. By Proposition \ref{prop:det-balanced}, the form $\alpha_D(X,Y)=g(JDX,Y)$ is positive and balanced, $\nabla^bD=0$, and $\alpha_D-\alpha_K$ is closed.

Since $\alpha_D$ is balanced, $d(\alpha_D^{n-1})=0$, and hence
\begin{equation}\label{eq:main-gauduchon} \partial\bar\partial\alpha_D^{n-1}=0.
\end{equation}
On the other hand, Proposition \ref{prop:additional-gauduchon} gives \begin{equation}\label{eq:main-nminus2} \partial\bar\partial\alpha_D^{n-2}\wedge\alpha_D=0.
\end{equation}
We compare these two identities pointwise.

For any real $(1,1)$-form $\rho$ and any integer $m\geq2$, the
graded Leibniz rule gives
\begin{equation}\label{eq:power-ddbar}
\partial\bar\partial(\rho^m)
=
m\,\partial\bar\partial\rho\wedge\rho^{m-1}
+
m(m-1)\,
\partial\rho\wedge\bar\partial\rho\wedge\rho^{m-2}.
\end{equation}
Indeed,
$\bar\partial(\rho^m)=m\,\bar\partial\rho\wedge\rho^{m-1}$.
Applying $\partial$ gives the first term in
\eqref{eq:power-ddbar} together with
$-m(m-1)\bar\partial\rho\wedge\partial\rho\wedge\rho^{m-2}$.
Since $\partial\rho$ and $\bar\partial\rho$ both have total degree
$3$, interchanging them introduces another minus sign and yields
the second term in \eqref{eq:power-ddbar}. For $m=1$, one simply
has $\partial\bar\partial(\rho^m)=\partial\bar\partial\rho$.

Set
$$
X=\sqrt{-1}\,\partial\bar\partial\alpha_D
   \wedge\alpha_D^{n-2},
\qquad
Y=\sqrt{-1}\,\partial\alpha_D\wedge\bar\partial\alpha_D
   \wedge\alpha_D^{n-3}.
$$
Applying \eqref{eq:power-ddbar} to
$\rho=\alpha_D$ and $m=n-1$, then using
\eqref{eq:main-gauduchon}, gives
\begin{equation}\label{eq:main-XY1}
X+(n-2)Y=0.
\end{equation}

Suppose first that $n\geq4$. Applying
\eqref{eq:power-ddbar} with $m=n-2$, wedging with
$\alpha_D$, and using \eqref{eq:main-nminus2}, we obtain
\begin{equation}\label{eq:main-XY2}
X+(n-3)Y=0.
\end{equation}
If $n=3$, equation \eqref{eq:main-nminus2} is simply
$\partial\bar\partial\alpha_D\wedge\alpha_D=0$, hence $X=0$;
thus \eqref{eq:main-XY2} also holds in this case, since
$n-3=0$. Therefore \eqref{eq:main-XY2} is valid for every
$n\geq3$.

Subtracting \eqref{eq:main-XY2} from
\eqref{eq:main-XY1}, we obtain
\begin{equation}\label{eq:torsion-wedge-zero}
Y
=
\sqrt{-1}\,
\partial\alpha_D\wedge\bar\partial\alpha_D
\wedge\alpha_D^{n-3}
=
0.
\end{equation}

Since $\alpha_D$ is balanced,
$\partial(\alpha_D^{n-1})=0$, and hence
$\partial\alpha_D\wedge\alpha_D^{n-2}=0$. By the Lefschetz
criterion, the $(2,1)$-form $\partial\alpha_D$ is therefore
primitive with respect to $\alpha_D$, equivalently
$\Lambda_{\alpha_D}(\partial\alpha_D)=0$. Applying
Lemma \ref{lem:HR} with $\rho=\alpha_D$ and
$\beta=\partial\alpha_D$, we obtain
$$
-\sqrt{-1}\,
\partial\alpha_D\wedge\bar\partial\alpha_D
\wedge\alpha_D^{n-3}
=
c_n|\partial\alpha_D|_{\alpha_D}^2\,dV_{\alpha_D},
$$
where $c_n>0$. By \eqref{eq:torsion-wedge-zero}, the left hand
side vanishes. Hence
$|\partial\alpha_D|_{\alpha_D}^2=0$, and therefore
$\partial\alpha_D=0$.

Since $\alpha_D$ is real,
$\bar\partial\alpha_D=\overline{\partial\alpha_D}=0$. Thus
$d\alpha_D=0$. The form $\alpha_D$ is positive, real, of type
$(1,1)$, and closed, and hence is a K\"ahler form on $(M,J)$.
This proves the theorem.
\end{proof}

\vspace{0.3cm}

\noindent\textbf{Generative AI disclosure.}
During the development of this work, the authors used ChatGPT $5.6$ Sol to assist with exploratory computations, possible proof directions, and editorial polishing. The authors checked the mathematical proofs and computations, and take full responsibility for the contents of the paper.

\vspace{0.3cm}

\noindent\textbf{Declaration on competing interests.}
All authors declare that there are no competing interests for this paper.


\vspace{0.3cm}

\begin{thebibliography}{99}

\bibitem{AI}
B. Alexandrov and S. Ivanov,
\emph{Vanishing theorems on Hermitian manifolds},
Differential Geom. Appl. \textbf{14} (2001), 251--265.

\bibitem{ArroyoNicolini}
R. M. Arroyo and M. Nicolini,
\emph{SKT structures on nilmanifolds},
Math. Z. \textbf{302} (2022), 1307--1320.

\bibitem{Bismut}
J.-M. Bismut,
\emph{A local index theorem for non-K\"ahler manifolds},
Math. Ann. \textbf{284} (1989), 681--699.

\bibitem{CaoZheng}
K. Cao and F. Zheng,
\emph{Fino--Vezzoni conjecture on Lie algebras with Abelian ideals of codimension two},
Math. Z. \textbf{307} (2024), no. 2, Paper No. 31, 23 pp.

\bibitem{Chiose}
I. Chiose,
\emph{Obstructions to the existence of K\"ahler structures on compact complex manifolds},
Proc. Amer. Math. Soc. \textbf{142} (2014), 3561--3568.

\bibitem{Fei}
T. Fei,
\emph{A construction of non-K\"ahler Calabi--Yau manifolds and new solutions to the Strominger system},
Adv. Math. \textbf{302} (2016), 529--550.

\bibitem{FGV}
A. Fino, G. Grantcharov, and L. Vezzoni,
\emph{Astheno-K\"ahler and balanced structures on fibrations},
Int. Math. Res. Not. IMRN 2019, no. 22, 7093--7117.

\bibitem{FP1}
A. Fino and F. Paradiso,
\emph{Generalized K\"ahler almost abelian Lie groups},
Ann. Mat. Pura Appl. (4) \textbf{200} (2021), 1781--1812.

\bibitem{FP2}
A. Fino and F. Paradiso,
\emph{Hermitian structures on a class of almost nilpotent solvmanifolds},
J. Algebra \textbf{609} (2022), 861--925.

\bibitem{FP3}
A. Fino and F. Paradiso,
\emph{Balanced Hermitian structures on almost abelian Lie algebras},
J. Pure Appl. Algebra \textbf{227} (2023), no. 2, Paper No. 107186.

\bibitem{FV2}
A. Fino and L. Vezzoni,
\emph{Special Hermitian metrics on compact solvmanifolds},
J. Geom. Phys. \textbf{91} (2015), 40--53.

\bibitem{FV16}
A. Fino and L. Vezzoni,
\emph{On the existence of balanced and SKT metrics on nilmanifolds},
Proc. Amer. Math. Soc. \textbf{144} (2016), no. 6, 2455--2459.

\bibitem{FinoVezzoniFlow}
A. Fino and L. Vezzoni,
\emph{A note on the pluriclosed flow on balanced manifolds with $c_1=0$},
arXiv:2606.03176, 2026.

\bibitem{FSwann22}
M. Freibert and A. Swann,
\emph{Two-step solvable SKT shears},
Math. Z. \textbf{299} (2021), no. 3--4, 1703--1739.

\bibitem{FSwann}
M. Freibert and A. Swann,
\emph{Compatibility of balanced and SKT metrics on two-step solvable Lie groups},
Transform. Groups \textbf{30} (2025), 235--265.

\bibitem{FuLiYau}
J. Fu, J. Li, and S.-T. Yau,
\emph{Balanced metrics on non-K\"ahler Calabi--Yau threefolds},
J. Differential Geom. \textbf{90} (2012), no. 1, 81--129.

\bibitem{FuWangWu}
J. Fu, Z. Wang, and D. Wu,
\emph{Semilinear equations, the $\gamma_k$ function, and generalized Gauduchon metrics},
J. Eur. Math. Soc. \textbf{15} (2013), 659--680.

\bibitem{FusiGentili}
E. Fusi and G. Gentili,
\emph{A Levi-type decomposition on two-step solvable Lie algebras with a complex structure},
arXiv:2606.13553, 2026.

\bibitem{Gauduchon84}
P. Gauduchon,
\emph{La $1$-forme de torsion d'une vari\'et\'e hermitienne compacte},
Math. Ann. \textbf{267} (1984), 495--518.

\bibitem{GPodesta}
F. Giusti and F. Podest\`a,
\emph{Real semisimple Lie groups and balanced metrics},
Rev. Mat. Iberoam. \textbf{39} (2023), no. 2, 711--729.

\bibitem{JostYau}
J. Jost and S.-T. Yau,
\emph{A nonlinear elliptic system for maps from Hermitian to Riemannian manifolds and rigidity theorems in Hermitian geometry},
Acta Math. \textbf{170} (1993), 221--254.

\bibitem{KobayashiNomizuI}
S. Kobayashi and K. Nomizu,
\emph{Foundations of Differential Geometry, Vol. I},
Interscience Publishers, New York, 1963.

\bibitem{KwongHomogeneous}
J. Kwong,
\emph{The Fino--Vezzoni conjecture on homogeneous spaces},
arXiv:2608.08665, 2026.

\bibitem{LiZheng}
Y. Li and F. Zheng,
\emph{The Fino--Vezzoni conjecture in Hermitian geometry},
Sci. Sin. Math. \textbf{54} (2024), no. 10, 1603--1614.

\bibitem{NiHolonomy}
L. Ni,
\emph{Holonomy and the Ricci curvature of complex Hermitian manifolds},
J. Geom. Anal. \textbf{35} (2025), no. 1, Paper No. 30.

\bibitem{Otiman}
A. Otiman,
\emph{Special Hermitian metrics on Oeljeklaus--Toma manifolds},
Bull. Lond. Math. Soc. \textbf{54} (2022), no. 2, 655--667.

\bibitem{Podesta}
F. Podest\`a,
\emph{Homogeneous Hermitian manifolds and special metrics},
Transform. Groups \textbf{23} (2018), no. 4, 1129--1147.

\bibitem{Verbitsky}
M. Verbitsky,
\emph{Rational curves and special metrics on twistor spaces},
Geom. Topol. \textbf{18} (2014), no. 2, 897--909.

\bibitem{Wells}
R. O. Wells, Jr.,
\emph{Differential Analysis on Complex Manifolds},
Graduate Texts in Mathematics, Vol. 65, Springer, New York, third edition, 2008.

\bibitem{YZ18Cur}
B. Yang and F. Zheng,
\emph{On curvature tensors of Hermitian manifolds},
Comm. Anal. Geom. \textbf{26} (2018), no. 5, 1195--1222.

\bibitem{ZhaoZ22}
Q. Zhao and F. Zheng,
\emph{Curvature characterization of Hermitian manifolds with Bismut parallel torsion},
arXiv:2407.10497; to appear in Trans. Amer. Math. Soc.

\bibitem{ZhaoZhengPluriclosed}
Q. Zhao and F. Zheng,
\emph{Strominger connection and pluriclosed metrics},
J. Reine Angew. Math. \textbf{796} (2023), 245--267.

\bibitem{ZZThreefold}
Q. Zhao and F. Zheng,
\emph{On balanced Hermitian threefolds with parallel Bismut torsion},
arXiv:2506.15141.

\end{thebibliography}
\end{document}